\newtheorem{theorem}{Theorem}[section]
\newtheorem{problem}[theorem]{Problem}
\newtheorem{proposition}[theorem]{Proposition}
\newtheorem{corollary}[theorem]{Corollary}
\newtheorem{assumption}[theorem]{Assumption}
\theoremstyle{definition}
\newtheorem{definition}[theorem]{Definition}
\newtheorem{example}[theorem]{Example}
\theoremstyle{remark}
\newtheorem{remark}[theorem]{Remark}
\numberwithin{equation}{section}
\newcommand{\nb}{\mathrm{T}}
\newcommand{\Ac}{\mathrm{Ac}}
\newcommand{\CoAc}{\mathrm{Coac}}
\newcommand{\Trim}{\mathrm{Trim}}
\begin{document}

\begin{abstract}                          
The paradigm of Cyber--Physical Systems of Systems (CPSoS) is becoming rather popular in the control systems research community because of its expressive power able to properly handle many engineered complex systems of interest. Decentralized control techniques offer a promising approach in taming the inherent complexity of CPSoS, also connected with the design of needed communication infrastructures and computing units. In this paper, we propose decentralized control of networks of discrete--time nonlinear control systems, enforcing complex specifications expressed in terms of regular languages, within any desired accuracy. As discussed in the paper, regular languages, while being traditionally studied in the research community of discrete--event systems, also provide a useful mean to model a rather wide variety of complex specifications for control systems. The design of decentralized controllers is based on formal methods and in particular, on the use of discrete abstractions. Efficient synthesis of such controllers is derived by resorting to on--the--fly algorithmic techniques that also allow the use of parallel computing architectures. Advantages and disadvantages of the decentralized approach over a centralized one, also in terms of computational complexity, are discussed. An illustrative example is presented, which shows the applicability and effectiveness of the results proposed. 
\end{abstract}

\title[Decentralized Supervisory Control of Networks of Nonlinear Control Systems]{Decentralized Supervisory Control of \\Networks of Nonlinear Control Systems}

\thanks{The research leading to these results has been partially supported by the Center of Excellence DEWS.}

\author[Giordano Pola, Pierdomenico Pepe and Maria D. Di Benedetto]{
Giordano Pola$^{1}$, Pierdomenico Pepe$^{1}$ and Maria D. Di Benedetto$^{1}$}
\address{$^{1}$
Department of Information Engineering, Computer Science and Mathematics, Center of Excellence DEWS,
University of L{'}Aquila, 67100 L{'}Aquila, Italy}
\email{ \{giordano.pola,pierdomenico.pepe,mariadomenica.dibenedetto\}@univaq.it}

\maketitle

\section{Introduction}\label{sec1}
The novel paradigm of Cyber--Physical Systems of Systems (CPSoS) offers a solid framework where to model, analyze and design complex systems arising in diverse application domains of interest, as for example, smart transportation and logistics, smart power systems and smart buildings, efficient industrial production and gas, water etc. networks. Following \cite{CPSoS}, CPSoS are characterized by large, often spatially distributed physical systems with complex dynamics, distributed control, supervision and management, partial autonomy of the subsystems, dynamic reconfiguration of the overall system on different time--scales, possibility of emerging behaviors, continuous evolution of the overall system during its operation. \\
Decentralized control techniques offer a promising approach in taming the inherent complexity of CPSoS. Main advantages of decentralized control architectures over the centralized ones are: (i) they are effective in cases where full state of possibly spatially distributed plants cannot be accessed by a centralized controller, because of potential physical constraints; (ii) they require no communication infrastructures and limited computing units resources with respect to centralized architectures; (iii) scalability, decentralized control architectures are often suited to control large--scale and distributed plants. \\
Several decentralized control techniques have been proposed in diverse research areas ranging from e.g. decentralized stabilization and regulation \cite{Davison76,SandellTAC78}, robust stabilization, optimization, reliability design \cite{Siljak12,RotkowitzTAC06,LinTAC13}, decentralized adaptive control \cite{Perutka00}, consensus and formation control problems in multi--agent systems \cite{EgerstedtBook, OlfatiPIEEE07,OlfatiTAC06}, with e.g. application to mobile robotics \cite{JadbabaieTAC03,Bullo09}, to decentralized supervisory control of discrete--event systems (DES), e.g. \cite{Rudie92,Cassandras}. In particular, the last research area offers a systematic approach to enforce complex specifications expressed in terms of regular languages on large--scale \textit{qualitative} systems as DES are. \\
The aim of this paper is to transfer this decentralized control design methodology from \textit{qualitative} systems to \textit{quantitative} systems, described by a network of discrete--time nonlinear (infinite states) control systems $\Sigma_i$. Decentralized control architecture consists of a collection of local controllers $C_i$, each one associated with $\Sigma_i$. 
As also generally assumed in decentralized control of dynamical systems, local controllers $C_i$ are not allowed to communicate. 
We focus on specifications expressed as regular languages, traditionally considered in the control design of DES. This class, when used in the control design of purely continuous (or hydrid) systems, is rather rich and, as also pointed out in \cite{TabuadaTAC08}, comprises reachability and motion planning specifications, periodic orbits, state-based switching specifications, specifications involving sequences of smaller tasks that need to be performed according to a given order. 
Moreover, operators known for regular languages, and for automata recognizing them, as for example concatenation, union, intersection and complement, see e.g. \cite{Cassandras}, provide a useful mean assisting the designer in properly modeling desired complex specifications. 
The approach that we use to solve our decentralized control problem is based on formal methods, see e.g. \cite{ModelChecking}, and in particular, on the use of discrete abstractions, also called symbolic models, see e.g. \cite{DiscAbs,paulo}. Symbolic models are abstract and \textit{finite}  descriptions of \textit{infinite states} control systems where each state corresponds to an aggregate of continuous states and each label to an aggregate of control inputs. 
In this regard, the recent work \cite{PolaTAC16}, proposing networks of symbolic models approximating networks of discrete--time nonlinear control systems, provides a useful framework where to solve our decentralized control problem. Indeed, once networks of symbolic models have been constructed, one can design decentralized controllers for purely symbolic/discrete processes, as e.g. DES are. 
However, current methodologies known for decentralized control of DES cannot be used in our framework because, as also recalled in the paper  through an example, local controllers in the DES domain contribute \textit{sequentially} in enforcing the global specification, whereas in our framework local controllers contribute \textit{concurrently} in enforcing the global specification. 
This key difference asks for classical results available for DES to be revisited. 
This is the object of investigation of the present paper. 
We design local controllers enforcing a given regular language specification on the original network of control systems, within any desired accuracy.  
By following the general ideas of on--the--fly algorithms as in e.g. \cite{onthefly2,onthefly3}, and in particular, by extending \cite{PolaTAC12} to a decentralized setting, we propose efficient controllers synthesis that can be also implemented via parallel computing architectures. 
A comparison with a centralized approach is formally discussed, which shows that the parts of the specification that can be enforced through a centralized control architecture and through a decentralized control architecture coincide; the only limitation of decentralized architectures 
lies in the need for local controllers to agree in advance on which part of the specification to enforce, which however,  
as discussed in the paper also through an example, is intrinsic of any decentralized control architecture not allowing local controllers to communicate. 
This is important because, even when physical constraints allow designing communication and computing infrastructures needed in centralized control architectures, one can use decentralized control architectures, thus saving resources for designing and effectively implementing needed infrastructures. 
Advantages in terms of computational complexity of proposed decentralized controllers over to the centralized ones are discussed. 
An illustrative example is also included, which shows the applicability and effectiveness of the results proposed. \\
To the best of our knowledge, formal methods techniques developed in this paper have not yet been explored in the current literature on decentralized control of dynamical systems, with the only exception of \cite{BorriNecSys2013} which however, is tailored on special classes of regular language specifications. \\
Preliminary results of this paper are reported in \cite{PolaCDC16dec}. This paper extends \cite{PolaCDC16dec} by providing novel results on on--the--fly algorithms, a formal comparison with centralized control architectures and an illustrative example. \\
The paper is organized as follows. Section \ref{sec2} introduces the decentralized control problem set--up. Section \ref{sec3} introduces 
some preliminary results that are used in Section \ref{sec4} to derive the solution to the control problem.
Formal comparison with centralized control architectures is discussed in Section \ref{sec5}. Section \ref{sec6} presents on--the--fly algorithms and derives computational complexity analysis. Section \ref{sec7} offers an illustrative example. Concluding remarks are given in Section \ref{sec8}. Notation and basic definitions are reported in the Appendix.

\section{Networks of control systems and problem formulation} \label{sec2}
The network of control systems that we consider in this paper is given as the interconnection of the discrete--time nonlinear control systems $\Sigma_1,\Sigma_2,...,\Sigma_N$ described by:
\begin{equation}
\label{NetCS}
\Sigma_i :
\left\{
\begin{array}
{l}
x_{i}(t+1)=f_i(x_1(t),
...,x_N(t),u_i(t)),\\
x_{i}(t) \in \mathbb{R}^{n_i},u_{i}(t) \in \mathcal{U}_i \subset \mathbb{R}^{m_i},t \in \mathbb{N}_{0}.
\end{array}
\right.
\end{equation}

\noindent Let $n=\sum_{i\in [1;N]}n_i$ and $m=\sum_{i\in [1;N]}m_i$. 
Functions $f_{i}:\mathbb{R}^{n} \times \mathbb{R}^{m_i} \rightarrow \mathbb{R}^{n_i}$ are assumed to be continuous in their arguments and satisfying $f_i(0_{n},0_{m_i})=0_{n_i}$. 
Sets $\mathcal{U}_{i}$ are assumed to be finite and containing the origin $0_{m_i}$; this assumption is motivated by concrete applications in e.g. CPSoS where control inputs can only assume a finite number of values. 
A trajectory of $\Sigma_i$ is a function 
\[
x_i:[0;t_{f,i}] \rightarrow \mathbb{R}^{n_i}
\]
satisfying (\ref{NetCS}) for all times $t\in [0;t_{f,i}[$, for some time $t_{f,i}\in\mathbb{N}_0$, where we set $[0;0[=\varnothing$. 
For coincise notation, we may also refer to the network of control systems in (\ref{NetCS}) by the control system 
\begin{equation}
\label{sigma}
\Sigma:
\left\{
\begin{array}
{l}
x(t+1)=f(x(t),u(t)),\\
x(t) \in \mathbb{R}^{n},u(t) \in \mathcal{U} \subset \mathbb{R}^{m}, t \in \mathbb{N}_{0},
\end{array}
\right.
\end{equation}
where $\mathcal{U}=\bigtimes_{i\in [1;N]} \mathcal{U}_{i}$ and $f(x,(u_1,...,u_N))=(f_1(x,u_1),$ $...,f_N(x,u_N))$ for any $x\in \mathbb{R}^{n}$ and $(u_1,...,u_N)\in \mathbb{R}^{m}$. 
A trajectory of $\Sigma$ is a function 
\begin{equation}
\label{traj}
x:[0;t_f] \rightarrow \mathbb{R}^{n}
\end{equation}
satisfying (\ref{sigma}) for all times $t\in [0;t_f[$, for some time $t_f\in\mathbb{N}_0$.
Trajectory 
\begin{equation}
\label{trajbis}
x':[0;t'_f] \rightarrow \mathbb{R}^{n}
\end{equation}
is said to be a continuation of trajectory $x(\cdot)$ as in (\ref{traj}), if $t_f < t'_f$ and $x'(t)=x(t)$ for all $t\in [0;t_f]$. \\
We now formalize the class of specifications we focus on in this paper. Let $Y_{Q}$ be a finite subset of the state space $\mathbb{R}^{n}$ of $\Sigma$. The specification is expressed as a regular language 
\begin{equation}
\label{spec}
L_Q\subset Y_{Q}^\ast,
\end{equation}
where $Y_{Q}^\ast$ is the Kleene closure of $Y_{Q}$. This class of specifications is rather rich and comprises, as also pointed out in \cite{TabuadaTAC08}, reachability and motion planning specifications, periodic orbits, state-based switching specifications, specifications involving sequences of smaller tasks that need to be performed according to a given order. \\ 
We now define the class of decentralized controllers we consider. To this purpose, consider the directed graph $\mathcal{G}=(\mathcal{V},\mathcal{E})$, describing the interaction among subsystems $\Sigma_i$ in the network, where $\mathcal{V}=[1;N]$ and $(j,i)\in \mathcal{E}$, if function $f_{i}$ of $\Sigma_{i}$ depends explicitly on variable $x_{j}$ or equivalently, there exist $y_{j},z_{j}\in \mathbb{R}^{n_j}$ such that $f_{i}(x_1,...,x_{j-1},y_{j},$ $x_{j+1},...,x_n,u_{i}) \neq f_{i}(x_1,...,x_{j-1},z_{j},x_{j+1},...,x_n,u_{i})$ for some $x_k\in \mathbb{R}^{n_k}$, $k\in [1;N]$ and $k\neq j$, and $u_i\in\mathcal{U}_i$. Moreover, for any $i \in [1;N]$ define
\begin{equation}
\label{N(i)}
\mathcal{N}(i)=\{j \in \mathcal{V} : (j ,i) \in \mathcal{E}\}.
\end{equation}
For later purposes, for any $i\in [1;N]$ consider also the functions 
\begin{equation}
\label{psi}
\psi_i : \mathbb{R}^{n_i} \times (\bigtimes_{j\in\mathcal{N}(i)} \mathbb{R}^{n_j} ) \times \mathcal{U}_i\rightarrow \mathbb{R}^{n_i}
\end{equation}
such that $\psi_i(x_i,w_i,u_i)=f_i(x,u_i)$ with $w_i=(x_{j_1},x_{j_2},...,x_{j_{l_i}})\in (\bigtimes_{j_s\in\mathcal{N}(i)} \mathbb{R}^{n_{j_s}})$, for all $u_i\in\mathcal{U}_i$ and all $x=(x_1,x_2,...,x_N)\in\mathbb{R}^n$. 
We assume a decentralized architecture for the controller which is then specified as a collection of local dynamic controllers $C_i$, where $C_i$ is associated with $\Sigma_i$, in the form of
\begin{equation}
\label{Ci}
C_i:
\left\{
\begin{array}
{l}
x_{c,i}(t+1) = f_{c,i}(x_{c,i}(t)),\\
u_{i}(t) \in h_{c,i}(x_{c,i}(t))\subseteq \mathcal{U}_i,\\
x_{c,i}(0) \in X_{c,i}^0,\\
x_{c,i}(t) \in X_{c,i},t\in\mathbb{N}_0,
\end{array}
\right. 
\end{equation}
where $x_{c,i}(t)$ is the state of $C_i$ and $u_{i}(t)$ is the output of $C_i$ at time $t$. 
Controllers $C_i$ are open--loop, i.e. they do not depend on the current states $x_i(t)$ and $x_j(t)$ with $j\in \mathcal{N}(i)$, as instead often assumed in decentralized control of dynamical systems. We defer to Remarks \ref{remstatic} and \ref{remTSC} a discussion in this regard. 
Let 
\[
C=(C_1,C_2,...,C_N)
\]
be the decentralized controller applied to the network of control systems $\Sigma_i$. Interaction between control systems $\Sigma_i$ in the network and local controllers $C_i$ is obtained by coupling Eqns. (\ref{NetCS}) and (\ref{Ci}), for all $i\in [1;N]$, and denoted as $\Sigma^C$. Control system $\Sigma^C$ may exhibit blocking behaviors. In fact, existence of a trajectory $x(\cdot)$ of $\Sigma^C$ in the form of (\ref{traj}) implicitly requires that for all times $t\in [0;t_f[$ and $i\in [1;N]$:
\begin{equation}
\label{Cnonblock}
h_{c,i}(x_{c,i}(t))\neq\varnothing.
\end{equation}
We can now formalize the control problem we consider: 

\begin{problem}
\label{problem}
Given the network of control systems $\Sigma_i$ in (\ref{NetCS}), the regular language specification $L_Q$ in (\ref{spec}) and a desired accuracy $\theta\in\mathbb{R}^+$, find 
a set of initial states $\mathcal{X}_0 \subseteq \mathbb{R}^n$, a set of final states $\mathcal{X}_f \subseteq \mathbb{R}^n$ and 
a collection of local controllers $C_i$ in (\ref{Ci}) 
such that:
\begin{itemize}
\item[(i)] for any trajectory $x(\cdot)$ of $\Sigma^C$ as in (\ref{traj}) with $x(0)\in \mathcal{X}_0$, either $x(t_f)\in \mathcal{X}_f$ or there exists a continuation $x'(\cdot)$ of $x(\cdot)$, as in (\ref{trajbis}), such that $x'(t'_f)\in \mathcal{X}_f$;
\item[(ii)] for any trajectory $x(\cdot)$ of $\Sigma^C$ as in (\ref{traj}) with $x(0)\in \mathcal{X}_0$ and $x(t_f)\in \mathcal{X}_f$, there exists a word $q_0 q_1 ... q_{t_f} \in L_Q$ such that for all times $t \in [0;t_f]$:
\begin{equation}
\label{condP2}
\Vert x(t) - q_t \Vert \leq\theta.
\end{equation}
\end{itemize}
\end{problem}

Since condition (\ref{condP2}) relaxes condition $x(t)=q_t$, Problem \ref{problem} can be thought of as an approximate version of classical decentralized supervisory control problems traditionally given for DES (see e.g. \cite{Cassandras}) and here extended to networks of (infinite states) nonlinear control systems. Further discussion in this regard is reported in Remark \ref{remTSC} in Section \ref{sec5}.\\
For later purposes, we give the following
\begin{definition}
\label{DefPartDec}
Triplet $(C,\mathcal{X}_0,\mathcal{X}_f)$ is said to enforce a word $q_0 q_1 ... q_{t_f} \in L_Q$ within accuracy $\theta$ on $\Sigma$, if 
there exists a trajectory $x(\cdot)$ of $\Sigma^C$ as in (\ref{traj}) with $x(0)\in \mathcal{X}_0$ and $x(t_f)\in \mathcal{X}_f$ satisfying (\ref{condP2}) for all times $t \in [0;t_f]$.
\end{definition}

\section{Approximating networks of control systems} \label{sec3}
In this section we propose some results based on \cite{PolaTAC16} and concerning the construction of networks of symbolic models approximating networks of control systems. 
A symbolic model is an abstract description of a control system where each state corresponds to an aggregate of continuous states and each label to an aggregate of control inputs. 
We start by giving a representation of $\Sigma$ in terms of systems:
\begin{definition}
\label{syssigma}
Given $\Sigma$, define the system 
\[
S(\Sigma)=(X,X_0,U,\rTo,X_m,Y,H)
\]
where $X=X_0=X_m=\mathbb{R}^n$, $U=\mathcal{U}$, $x \rTo^{u} x^+$ if $x^{+}=f(x,u)$, $Y=\mathbb{R}^n$, and $H(x)=x$ for any $x\in \mathbb{R}^n$. 
\end{definition}

System $S(\Sigma)$ is metric when we regard $Y=\mathbb{R}^n$ as being equipped with the metric 
$\mathbf{d}(x,x')=\max_{i\in [1;N]} 
\mathbf{d}_i(x_i,x'_i)$, for all $x=(x_1,x_2,...,x_N),x'=(x'_1,x'_2,...,x'_N) \in \mathbb{R}^n$, 
where $\mathbf{d}_i$, defined by $\mathbf{d}_i(x_i,x'_i)=\Vert x_i-x'_i\Vert$ for all $x_i,x'_i\in\mathbb{R}^{n_i}$, is the metric used for $\mathbb{R}^{n_i}$. System $S(\Sigma)$ will be approximated by means of networks of systems that are introduced in the following

\begin{definition} 
\label{symbmod}
Given $\Sigma_i$, $i\in [1;N]$ and a quantization vector $\eta \in\mathbb{R}^{+}_N$, define the system 
\[
S^{\eta}(\Sigma_i)=(X^{\eta}_{i},X^{\eta}_{i,0},W^{\eta}_i\times U^{\eta}_i,\rTo_{\eta,i},X^{\eta}_{i,m},Y^{\eta}_i,H^{\eta}_i)
\]
where:
\begin{itemize}
\item $X^{\eta}_i=X^{\eta}_{i,0}=X^{\eta}_{i,m}=\eta(i) \mathbb{Z}^{n_i}$;
\item $W^{\eta}_i=\bigtimes_{j\in\mathcal{N}(i)} \eta(j)\mathbb{Z}^{n_j}$;
\item $U^{\eta}_i=\mathcal{U}_i$;
\item $\xi_i \rTo^{(w_i,u_i)}_{\eta,i} \xi^{+}_i$, if $\xi^{+}_i=[\psi_i(\xi_i,w_i,u_i)]_{\eta(i)}$ 
with $\psi_i$ in (\ref{psi});
\item $Y^{\eta}_i=\mathbb{R}^{n_i}$;
\item $H^{\eta}_i(\xi_i)=\xi_i$ for any $\xi_i\in X^{\eta}_i$.
\end{itemize}
\end{definition}

Each system $S^{\eta}(\Sigma_i)$ approximates each control system $\Sigma_i$ in the network for any desired accuracy. It is countable and becomes symbolic when one is interested in the dynamics of $\Sigma_i$ on a bounded subset of $\mathbb{R}^{n_i}$, as in most applications of interest and also in this paper, see Section \ref{sec4}. 
System $S^{\eta}(\Sigma_i)$ is metric with metric $\mathbf{d}_i$. 
By definition of the transition relation $\rTo_{\eta,i}$ and since operator $[\cdot]_{\eta(i)}$ is a function, system $S^{\eta}(\Sigma_i)$ is deterministic. 
Interaction among systems $S^{\eta}(\Sigma_i)$ is formalized by the following 
\begin{definition}
\label{lab}
\cite{PolaTAC16}
Given $S^{\eta}(\Sigma_i)$, $i\in [1;N]$, define the network of systems 
\[
\mathcal{S}(\{S^{\eta}(\Sigma_i)\}_{i\in [1;N]})= (X^{\eta},X^{\eta}_0,U^{\eta},\rTo_{\eta},X^{\eta}_m,Y^{\eta},H^{\eta})
\]
where:
\begin{itemize}
\item $X^{\eta}=X^{\eta}_0=X^{\eta}_m=\bigtimes_{i\in [1;N]} X^{\eta}_{i}$;
\item $U^{\eta}=\bigtimes_{i\in [1;N]}U^{\eta}_{i}$;
\item $(\xi_{1}, ...,\xi_{N})\rTo_\eta^{(u_1, ...,u_N)} (\xi_{1}^{+}, ...,\xi_{N}^{+})$, if $\xi_{i} \rTo_{\eta,i}^{(w_i,u_i)} \xi_{i}^{+}$ with $w_i=(\xi_{j_1},\xi_{j_2},...,\xi_{j_{l_i}})$, $j_s\in\mathcal{N}(i)$ for any $i\in [1;N]$;
\item $Y^{\eta}=\bigtimes_{i\in [1;N]}Y^{\eta}_{i}$;
\item $H^{\eta}(\xi_1, ...,\xi_N)=(H^{\eta}_1(\xi_1), ...,H^{\eta}_N(\xi_N))$. 
\end{itemize}
\end{definition}

System $\mathcal{S}(\{S^{\eta}(\Sigma_i)\}_{i\in [1;N]})$ is metric with metric $\mathbf{d}$ and inherits from systems $S^{\eta}(\Sigma_i)$ the properties of being deterministic and countable/symbolic. 
In the sequel we consider the following 
\begin{assumption}
\label{A1}
There exists a locally Lipschitz, incrementally globally asymptotically stable ($\delta$--GAS) Lyapunov function (see \cite{IncrementalS}) 
\begin{equation}
\label{Lfunction}
V: \mathbb{R}^{n} \times \mathbb{R}^{n} \rightarrow \mathbb{R}^{+}_{0}
\end{equation}
for $\Sigma$, i.e. function $V$ satisfies the following inequalities for all $x,x'\in\mathbb{R}^{n}$ and $u\in\mathcal{U}$:
\begin{itemize}
\item[(i)] $\underline{\alpha}(\left\Vert x-x' \right\Vert )\leq V(x,x')\leq \overline{\alpha}(\left\Vert x-x' \right\Vert )$, 
\item[(ii)] $V(f(x,u),f(x',u)) - V(x,x') \leq -\rho ( V(x,x') )$,
\end{itemize}
for some $\mathcal{K}_{\infty}$ functions $\underline{\alpha}$, $\overline{\alpha}$, $\rho$.
\end{assumption}
Throughout the paper we assume the existence of a $\mathcal{K}_{\infty}$ function $\sigma$ such that the $\delta$--GAS Lyapunov function $V$ satisfies for all $x,y,z\in\mathbb{R}^n$
\begin{equation}
\label{ineqnew}
\vert V(x,y) - V(x,z) \vert \leq \sigma (\Vert y-z \Vert ).
\end{equation}
\noindent
The above assumption is not restrictive since in order to solve Problem \ref{problem} we are interested in the dynamics of $\Sigma$ on a bounded subset of $\mathbb{R}^{n}$ (see Section \ref{sec4}). 
We now have all the ingredients to present the following

\begin{proposition}
\label{ThSCC}
Suppose that Assumption \ref{A1} holds. Then, for any desired accuracy $\mu\in\mathbb{R}^{+}$ and for any quantization vector $\eta \in \mathbb{R}^{+}_{N}$ satisfying the following inequality
\begin{equation}
\Vert \eta \Vert \leq \min \left\{
(\sigma^{-1} \circ \rho \circ \underline{\alpha})(\mu),(\overline{\alpha}^{-1} \circ \underline{\alpha})(\mu)
\right\},
\label{statem}
\end{equation}
relation $\mathcal{R}_{\mu}\subseteq \mathbb{R}^n \times X^{\eta}$ specified by
\begin{equation}
\label{relinit}
(x,\xi)\in \mathcal{R}_{\mu} \Leftrightarrow V(x,\xi) \leq \underline{\alpha}(\mu)
\end{equation}
is a strong $\mu$--approximate bisimulation between $S(\Sigma)$ and $\mathcal{S}(\{S^{\eta}(\Sigma_i)\}_{i\in [1;N]})$. Consequently, systems $S(\Sigma)$ and $\mathcal{S}(\{S^{\eta}(\Sigma_i)\}_{i\in [1;N]})$ are strongly $\mu$-bisimilar.
\end{proposition}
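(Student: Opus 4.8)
The plan is to verify directly that the relation $\mathcal{R}_\mu$ in (\ref{relinit}) satisfies the defining conditions of a strong $\mu$-approximate bisimulation between $S(\Sigma)$ and $\mathcal{S}(\{S^{\eta}(\Sigma_i)\}_{i\in [1;N]})$, namely: (a) output proximity, i.e. $\mathbf{d}(H(x),H^{\eta}(\xi))\le\mu$ whenever $(x,\xi)\in\mathcal{R}_\mu$; (b) the two symmetric transition-matching conditions; and (c) compatibility of initial and marked states. Condition (a) is immediate: from $(x,\xi)\in\mathcal{R}_\mu$ and inequality (i) of Assumption \ref{A1} one gets $\underline{\alpha}(\Vert x-\xi\Vert)\le V(x,\xi)\le\underline{\alpha}(\mu)$, hence $\Vert x-\xi\Vert\le\mu$; since the output maps are identities and $\mathbf{d}(x,\xi)=\max_i\Vert x_i-\xi_i\Vert\le\Vert x-\xi\Vert$, this yields $\mathbf{d}(H(x),H^{\eta}(\xi))\le\mu$.

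For the transition conditions I would first record that both systems are deterministic and share the input set $\mathcal{U}$, and that for any $(\xi,u)$ the network successor is $\xi^{+}=([f_1(\xi,u_1)]_{\eta(1)},\dots,[f_N(\xi,u_N)]_{\eta(N)})$, i.e. the componentwise quantization of $f(\xi,u)$, because $\psi_i(\xi_i,w_i,u_i)=f_i(\xi,u_i)$. Consequently both matching conditions collapse to a single \emph{invariance} claim: if $(x,\xi)\in\mathcal{R}_\mu$ and $u\in\mathcal{U}$, then the unique successors $x^{+}=f(x,u)$ and $\xi^{+}$ again satisfy $(x^{+},\xi^{+})\in\mathcal{R}_\mu$. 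To prove this I would chain three estimates: using (\ref{ineqnew}) with the quantization bound $\Vert f(\xi,u)-\xi^{+}\Vert\le\Vert\eta\Vert$ (up to the norm convention fixed in the Appendix) gives $V(x^{+},\xi^{+})\le V(f(x,u),f(\xi,u))+\sigma(\Vert\eta\Vert)$; the decrease inequality (ii) gives $V(f(x,u),f(\xi,u))\le V(x,\xi)-\rho(V(x,\xi))$; and $(x,\xi)\in\mathcal{R}_\mu$ gives $V(x,\xi)\le\underline{\alpha}(\mu)$.

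The main obstacle is to conclude from these that $V(x,\xi)-\rho(V(x,\xi))+\sigma(\Vert\eta\Vert)\le\underline{\alpha}(\mu)$. Here the first term of (\ref{statem}) enters: $\Vert\eta\Vert\le(\sigma^{-1}\circ\rho\circ\underline{\alpha})(\mu)$ yields exactly $\sigma(\Vert\eta\Vert)\le\rho(\underline{\alpha}(\mu))$. It then remains to show that the worst case of $r\mapsto r-\rho(r)$ over $r\in[0,\underline{\alpha}(\mu)]$ is attained at the right endpoint $r=\underline{\alpha}(\mu)$, so that $V(x,\xi)-\rho(V(x,\xi))\le\underline{\alpha}(\mu)-\rho(\underline{\alpha}(\mu))$ and the two $\rho(\underline{\alpha}(\mu))$ terms cancel, leaving $V(x^{+},\xi^{+})\le\underline{\alpha}(\mu)$. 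This monotonicity of $\mathrm{Id}-\rho$ is the delicate point: it is forced by $V\ge 0$ together with (ii) (which already requires $\rho(r)\le r$ on the range of $V$) and holds for genuine $\delta$-GAS Lyapunov functions; I would state it explicitly as the property that makes the endpoint the worst case before cancelling.

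Finally, for the initial/marked-state conditions I would use the second term of (\ref{statem}). Given $x\in X_0=\mathbb{R}^n$, choose $\xi=[x]_{\eta}\in X^{\eta}_0$; then $\Vert x-\xi\Vert\le\Vert\eta\Vert$ and, by (i), $V(x,\xi)\le\overline{\alpha}(\Vert\eta\Vert)\le\underline{\alpha}(\mu)$ because $\Vert\eta\Vert\le(\overline{\alpha}^{-1}\circ\underline{\alpha})(\mu)$, so $(x,\xi)\in\mathcal{R}_\mu$. Conversely, for $\xi\in X^{\eta}_0\subseteq\mathbb{R}^n=X_0$ take $x=\xi$, noting $V(\xi,\xi)=0\le\underline{\alpha}(\mu)$ since $V(z,z)\le\overline{\alpha}(0)=0$. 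The identical argument applies to the marked sets $X_m=\mathbb{R}^n$ and $X^{\eta}_m$. Having verified output proximity, the invariance of $\mathcal{R}_\mu$ under matched transitions, and the initial/marked-state compatibility, $\mathcal{R}_\mu$ is a strong $\mu$-approximate bisimulation; since it relates every initial state of each system to an initial state of the other, the two systems are strongly $\mu$-bisimilar, which is the final claim.
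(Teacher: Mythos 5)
Your verification is essentially correct, but it takes a genuinely different route from the paper: the paper's entire proof is a one-line appeal to Proposition 1 of \cite{PolaTAC16}, whereas you re-derive the result from scratch by checking the four conditions of the strong $\mu$-approximate bisimulation definition directly. Your decomposition is the right one and each step lands where it should --- output proximity from inequality (i) of Assumption \ref{A1}, the initial/marked-state conditions from the second term of (\ref{statem}) via $\xi=[x]_{\eta}$, and the transition invariance from the chain combining (\ref{ineqnew}), the quantization error bound, and the decrease inequality (ii), with the first term of (\ref{statem}) absorbing $\sigma(\Vert\eta\Vert)$ into $\rho(\underline{\alpha}(\mu))$. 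What your approach buys is transparency: it makes visible exactly which half of (\ref{statem}) serves which condition, and it exploits the determinism of both systems to collapse the two simulation directions into a single invariance claim, a simplification not available in the nondeterministic setting of \cite{PolaTAC16}. The one point to tighten is the monotonicity of $r\mapsto r-\rho(r)$ on $[0,\underline{\alpha}(\mu)]$: you are right that this is the delicate step and right to insist it be stated, but your parenthetical suggestion that it is \emph{forced} by $V\geq 0$ together with (ii) overstates the case --- those only yield $\rho(r)\leq r$ on the range of $V$, not that $\mathrm{Id}-\rho$ is nondecreasing. The standard fix, implicit in the cited reference, is to assume without loss of generality that $\mathrm{Id}-\rho$ is a $\mathcal{K}$ function (one may always replace $\rho$ by a smaller $\mathcal{K}_{\infty}$ function with this property, which only weakens inequality (ii)); with that hypothesis recorded explicitly, your argument is complete.
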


\smallskip

\begin{proof}
Direct consequence of Proposition 1 in \cite{PolaTAC16}.
\end{proof}

\medskip

The above result requires the existence of a $\delta$--GAS Lyapunov function for $\Sigma$. Compositional design of such Lyapunov function can be done by resorting e.g. to the small--gain theorem, see for instance \cite{SmallGainTh2}. These arguments have been used in \cite{PolaTAC16} to derive networks of symbolic models approximating networks of discrete--time nonlinear control systems. 
The main difference between the results reported in this section and in \cite{PolaTAC16} are: (i) while systems in Definition \ref{symbmod} are deterministic, those in \cite{PolaTAC16} are nondeterministic; (ii) quantization parameters $\eta(i)$ can be selected here independently from $\eta(j)$ but in the respect of (\ref{statem}), while selection of $\eta(i)$ depends on the selection of some other $\eta(j)$ in \cite{PolaTAC16}; (iii) sets $\mathcal{U}_i$ are finite here while they are convex, bounded and with interior in \cite{PolaTAC16}.

\section{Decentralized supervisory control design} \label{sec4}

In this section we provide the solution to Problem \ref{problem}. By using the results in Section \ref{sec3}, the design of decentralized controllers can be translated from a continuous (infinite states) domain to a symbolic (finite) domain. Hence, one could in principle use techniques available for DES to design decentralized controllers, see e.g. \cite{Rudie92,Cassandras}. 
However, these techniques cannot be used in our framework because while local controllers in Problem \ref{problem} contribute \textit{concurrently} in enforcing the global specification, in decentralized supervisory control of DES, local controllers contribute \textit{sequentially} in enforcing the global specification, as briefly recalled in the following example.

\begin{example}
\label{example} 
Consider a finite system $S$ with $\mathcal{L}_m^u(S)=\mathcal{U}^\ast$ where $\mathcal{U}^\ast$ denotes the Kleene closure of $\mathcal{U}=\{a,b,c\}$. Consider the regular language specification $L_Q=\{\varepsilon,a,ab,aba,abab,ababa,...\}$ where $\varepsilon$ is the empty word. Suppose that supervisors (controllers) $C_1$ and $C_2$ are characterized by sets of controllable\footnote{We refer to e.g. \cite{Cassandras} for the notions of controllable or observable events.} events $\mathcal{U}_1=\{a,c\}$ and $\mathcal{U}_2=\{b,c\}$, respectively. Assume further that the set of observable$^1$ events of $C_1$ and $C_2$ coincide in $\mathcal{U}$. A decentralized control policy enforcing $L_Q$ on $S$ is as follows: supervisor $C_1$ enforces event $a$ after having measured events $b$ and $\varepsilon$; supervisor $C_2$ enforces event $b$ after having measured event $a$. 
\end{example}
\smallskip
Motivated by inherent differences between decentralized control schemes used for DES and in Problem \ref{problem}, we now extend techniques of decentralized supervisory control from DES to our problem set--up. 
We start with the following 

\begin{example}
\label{examplenew}
Consider a network of two control systems $\Sigma_i$ described by $x_i(t+1)=-2x_i(t)+u_i(t)$, $t\in\mathbb{N}_0$ with $x_i(t) \in \mathbb{R}$, $\mathcal{U}_i=[-1;1]$ and a specification $L_Q$ described by the collection of words $(0,0)(1,1)$ and $(0,0)(-1,-1)$; set for simplicity the desired accuracy to $\theta=0$. First of all, corresponding control system $\Sigma$ satisfies Assumption \ref{A1}. In order for the specification to be enforced by a decentralized controller $C=(C_1,C_2)$, the controllers $C_i$ need to agree on which part of the specification they want to enforce. Indeed, if they want to enforce word $(0,0)(1,1)$, they both select at time $t=0$ control input $u_i(0)=1$; instead, if they want to enforce word $(0,0)(-1,-1)$, they both select at time $t=0$ control input $u_i(0)=-1$. If the controllers $C_i$ do not agree on which word of the specification $L_Q$ to enforce, $L_Q$ cannot be met by using \textit{any} decentralized control architecture. As a matter of fact, if $u_1(0)=1$ with the purpose of enforcing word $(0,0)(1,1)$, and if $u_2(0)=-1$ with the purpose of enforcing word $(0,0)(-1,-1)$, the state reached at time $t=1$ is $(1,-1)$ from which, $L_Q$ is not fulfilled. 
\end{example}
\begin{remark}
\label{remarknew}
At a general level, the problem raised in the above example can be solved as follows:\\
\textit{(i) Restriction of the class of specifications.} It is easy to see that the above problem is solved when $L_Q$ is "decoupled", i.e. it can be expressed as $L_Q=L_{Q,1} \times L_{Q,2} \times ... \times L_{Q,N}$ where each $L_{Q,i}$ is a regular language taking values in the projection of $Y_Q^\ast$ onto $\mathbb{R}^{n_i}$ and each $L_{Q,i}$ is enforced by a local controller $C_i$ which can be designed independently from any other $C_j$. This is for instance, the approach taken in \cite{BorriNecSys2013}. \\
\textit{(ii) Online agreement on the specification word to enforce.} 
When local controllers are allowed to share information through a to--be designed and implemented communication infrastructure, thus leading to a distributed control architecture, above problem can be solved because controllers can agree online on which word of the specification to enforce. \\ 
\textit{(iii) Offline agreement on the specification word to enforce.} 
When local controllers are not allowed to communicate, as in the decentralized control architecture we consider, 
controllers can only agree offline and hence in advance on which word of the specification to enforce. The advantage of this approach over the first one is that no restriction on the class of specifications is needed and, over the second approach, is that no communication infrastructure is required. In this paper we follow the third approach.
\end{remark}

We suppose that assumption of Proposition \ref{ThSCC} hold and use strong approximate bisimulation relation $\mathcal{R}_{\mu}$ defined in (\ref{relinit}). Let the system
\[
S'_Q=(X'_Q,X'_{0,Q},Y_Q,\rTo_{\prime,Q},X'_{Q,m},Y'_Q,H'_Q)
\]
be symbolic, deterministic, accessible and nonblocking and such that its input marked language coincides with the language specification, i.e. 
$
\mathcal{L}_m^u (S'_Q)=L_Q
$. 
Automatic tools for constructing $S'_Q$ are well known in the literature, see e.g. \cite{Jflap}. Given $S'_Q$, it is useful to define symbolic system $S_Q$ whose states are transitions of $S'_Q$ and vice versa. More formally:

\begin{definition}
Given system $S'_Q$, define system
\begin{equation}
\label{specsys}
S_Q=(X_Q,X_{Q,0},U_Q,\rTo_{Q},X_{Q,m},\mathbb{R}^n,H_Q)
\end{equation} 
where:
\begin{itemize}
\item $X_Q=\rTo_{\prime,Q}$;
\item $X_{Q,0}$ is the collection of states $x'_Q \rTo_{\prime,Q}^{u'_Q} x^{\prime,+}_Q$ in $X_Q$ with $x'_Q \in X'_{Q,0}$;
\item $U_Q=\{u_Q\}$, where $u_Q$ is a dummy input;
\item $\rTo_{Q}$ is the collection of transitions 
\[
\left(x^1_Q \rTo_{\prime,Q}^{u'_Q} x^2_Q\right) \rTo_Q^{u_Q} \left(x^3_Q \rTo_{\prime,Q}^{u'_Q} x^4_Q\right)
\]
with $x^2_Q=x^3_Q$;
\item $X_{Q,m}$ is the collection of states $x'_Q \rTo_{\prime,Q}^{u'_Q} x^{\prime,+}_Q$ in $X_Q$ with $x^{\prime,+}_Q \in X'_{Q,m}$;
\item $H_Q(x'_Q \rTo_{\prime,Q}^{u'_Q} x^{\prime,+}_Q)=u'_Q$ for any state $x'_Q \rTo_{\prime,Q}^{u'_Q} x^{\prime,+}_Q$ in $X_Q$.
\end{itemize}
\end{definition}

From the above definitions it is readily seen that 
\[
\mathcal{L}^y(S_Q)=\mathcal{L}^u(S'_Q), \quad \mathcal{L}_m^y(S_Q)=\mathcal{L}_m^u(S'_Q)=L_Q.
\]
Moreover, $S_Q$ is symbolic, accessible and nonblocking.  
In the sequel for ease of notation we denote a state 
$x'_Q \rTo_{\prime,Q}^{u'_Q} x^{\prime,+}_Q$ 
of $X_Q$ by $x_Q$ and a transition $x_Q \rTo_Q^{u_Q} x_Q^+$ of $S_Q$ by $x_Q \rTo_Q x_Q^+$. 
For any $i\in [1;N]$, function 
\[
H_{Q,i}:X_Q \rightarrow \mathbb{R}^{n_i}
\]
denotes the "projection" of function $H_Q$ onto $\mathbb{R}^{n_i}$, i.e. for all $x_Q \in X_Q$, $H_{Q,i}(x_Q)=q^i$ if $H_Q(x_Q)=(q^1,q^2,...,q^N)$. 
Consider the operators:
\[
\begin{array}
{l}
\mathcal{I}_i: (\rTo_Q) \times \mathbb{R}^+_N\rightarrow \{\tt{True},\tt{False}\},\mathit{i\in[1;N]}, \\
\mathcal{I}: (\rTo_Q) \times \mathbb{R}^+_N\rightarrow \{\tt{True},\tt{False}\}.\notag
\end{array}
\]
Consider any $\eta\in\mathbb{R}^+_N$ and any transition $x_Q \rTo_Q x_Q^+$ in $S_Q$. Then, for all $i \in [1;N]$ set
\begin{equation}
\label{deccontr0}
\mathcal{I}_i (x_Q \rTo_Q x_Q^+,\eta) = \tt{True},\\
\end{equation}
if there exists a control input $u_i\in U^{\eta}_i$ of $S^{\eta}(\Sigma_i)$ such that 
\begin{equation}
\label{condCi0}
[H_{Q,i}(x_{Q})]_{\eta(i)} \rTo_{\eta,i}
^{(v_i,u_i)} {[H_{Q,i}(x^+_{Q})]_{\eta(i)}},\\
\end{equation}
where 
$v_i=([H_{Q,{j_1}}(x_{Q})]_{\eta(j_1)},...,[H_{Q,{j_{l_i}}}(x_{Q})]_{\eta(j_{l_i})})$,
with $j_s\in\mathcal{N}(i)$. 
If no $u_i\in U^{\eta}_i$ exists satisfying (\ref{condCi0}), set
\begin{equation}
\label{deccontr0false}
\mathcal{I}_i (x_Q \rTo_Q x_Q^+,\eta) = \tt{False}.
\end{equation}
Operator $\mathcal{I}_i$, when evaluated in $x_Q \rTo_Q x_Q^+$ and $\eta$, is then set to $\tt{True}$ if transition $x_Q \rTo_Q x_Q^+$ can be matched by the system $S^\eta(\Sigma_i)$ and $\tt{False}$, otherwise. 
Since conditions (\ref{condCi0}) involve set of transitions of $S_Q$ and set $U^\eta_i$ that are finite, operator $\mathcal{I}_i$ can be effectively computed in a finite number of steps. 
Define:
\begin{equation}
\label{I}
\mathcal{I}(x_Q \rTo_Q x_Q^+,\eta)=\bigwedge_{i \in [1;N]} \mathcal{I}_i(x_Q \rTo_Q x_Q^+,\eta).
\end{equation}
Define the subsystem 
\begin{equation}
\label{SQeta}
S_{Q,\eta}=(X_Q,X_Q^0,U_Q,\rTo_{Q,\eta},X_{Q,m},Y_Q,H_Q)
\end{equation}
of $S_Q$ as in (\ref{specsys}), where the transition relation $\rTo_{Q,\eta}\subseteq \rTo_{Q}$ contains all and only transitions $x_Q \rTo_Q x_Q^+$ of $S_Q$ satisfying the following condition: 
\begin{equation}
\label{condfusion}
\mathcal{I}(x_Q \rTo_Q x_Q^+,\eta)=\tt{True}.
\end{equation}

\noindent
System $S_{Q,\eta}$ captures all transitions of the specification system $S_Q$ that can be matched by the control system $\Sigma^C$. However, system $S_{Q,\eta}$ is blocking in general. Since the controllers in $\Sigma^C$ are required to fulfill condition (\ref{Cnonblock}), we need to extract from $S_{Q,\eta}$ a subsystem exhibiting nonblocking behavior. 
This is accomplished by computing the subsystem $\Trim(S_{Q,\eta})$ of $S_{Q,\eta}$ (see Appendix for the definition of $\Trim$) which is indeed nonblocking. \\
We can now provide the solution to Problem \ref{problem}. We will follow the third approach discussed in Remark \ref{remarknew}.
Consider any word $\mathbf{q}$ marked by $\Trim(S_{Q,\eta})$, i.e. such that $\mathbf{q} \in \mathcal{L}_m^y(\Trim(S_{Q,\eta}))$, and let
\begin{equation}
\label{Sq}
S_\mathbf{q}=(X_{\mathbf{q}},\{x_{\mathbf{q}}^0\},U_{\mathbf{q}},\rTo_{\mathbf{q}},\{x_{\mathbf{q},m}\},Y_{\mathbf{q}},H_{\mathbf{q}})
\end{equation}
be a symbolic, accessible and nonblocking system, marking $\mathbf{q}$, i.e. such that $\mathcal{L}_m^y(S_\mathbf{q})=\{\mathbf{q}\}$. System $S_\mathbf{q}$ is characterized by a unique successor of each state. 
For this reason, in the sequel we write any transition of $S_\mathbf{q}$ in the form of $x_{\mathbf{q}} \rTo_{\mathbf{q}} x_{\mathbf{q}}^+$ by omitting the corresponding label. 
For any $i\in [1;N]$, function 
\[
H_{\mathbf{q},i}:X_{\mathbf{q}} \rightarrow \mathbb{R}^{n_i}
\]
denotes the "projection" of function $H_{\mathbf{q}}$ onto $\mathbb{R}^{n_i}$, i.e. for all $x_{\mathbf{q}} \in X_{\mathbf{q}}$, $H_{\mathbf{q},i}(x_{\mathbf{q}})=q^i$ if $H_{\mathbf{q}}(x_{\mathbf{q}})=(q^1,q^2,...,q^N)$. \\
Define the following sets:
\begin{equation}
\label{init}
\begin{array}
{rcl}
\mathcal{X}_0 & = &\mathcal{R}_{\mu}^{-1} (\bigtimes_{i\in [1;N]} \{[H_{\mathbf{q},i}(x_{\mathbf{q}}^0)]_{\eta(i)}\}),\\
\mathcal{X}_f & = &\mathcal{R}_{\mu}^{-1} (\bigtimes_{i\in [1;N]} \{[H_{\mathbf{q},i}(x_{\mathbf{q},m})]_{\eta(i)}\}).
\end{array}
\end{equation}
Entities defining $C_i$ in (\ref{Ci}) are then specified by:  
\begin{equation}
\label{deccontr}
\begin{array}
{l}
X_{c,i}^0 = \{x_{\mathbf{q}}^0\} ,\\
X_{c,i}=X_{\mathbf{q}},\\
f_{c,i}(x_{\mathbf{q}})=x_{\mathbf{q}}^+, \text{ if } x_{\mathbf{q}} \rTo_{\mathbf{q}} x_{\mathbf{q}}^+,\\
h_{c,i}(x_{\mathbf{q}})=
\left\{
\begin{array}
{l}
u_i\in U_i^{\eta}| x_{\mathbf{q}}^+=f_{c,i}(x_{\mathbf{q}}) \text{ and } \\
{[H_{\mathbf{q},i}(x_{\mathbf{q}})]_{\eta(i)}} \rTo_{\eta,i}
^{(v_i,u_i)} {[H_{\mathbf{q},i}(x^+_{\mathbf{q}})]_{\eta(i)}} 
\end{array}
\right\},\\
\end{array}
\end{equation}
where
$v_i=([H_{\mathbf{q},{j_1}}(x_{\mathbf{q}})]_{\eta(j_1)},...,[H_{\mathbf{q},{j_{l_i}}}(x_{\mathbf{q}})]_{\eta(j_{l_i})})$, with 
$j_s\in\mathcal{N}(i)$.
\begin{remark}
Sets $X_{c,i}^0$, $X_{c,i}$ and function $f_{c,i}$ in (\ref{deccontr}) are the same for all $C_i$. 
This feature is essential to solve problems raised in Example \ref{examplenew} and discussed in Remark \ref{remarknew}. As a by--product, this choice has the advantage of requiring limited computational effort that is significant when the number $N$ of subsystems in the network becomes large. We also stress that computation of functions $f_{c,i}$ and $h_{c,i}$ can be done offline, see Section \ref{sec6}, which is important because it reduces online computational time needed by the controllers to ensure timely control action. 
\end{remark}

We now have all the ingredients to present the main result of this paper.

\begin{theorem}
\label{Thmaindec}
Suppose that Assumption \ref{A1} holds. For any desired accuracy $\theta\in\mathbb{R}^+$ select $\mu\in\mathbb{R}^+$ and $\eta\in \mathbb{R}^+_N$ satisfying (\ref{statem}) and 
\begin{equation}
\mu + \Vert \eta \Vert /2 \leq \theta, \label{condMain2}
\end{equation}
Then, sets $\mathcal{X}_0$ and $\mathcal{X}_f$ in (\ref{init}) and controllers $C_i$ in (\ref{Ci}) specified by (\ref{deccontr}) solve Problem \ref{problem}.
\end{theorem}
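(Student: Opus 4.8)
The plan is to transport the whole argument to the symbolic domain, where Proposition~\ref{ThSCC} supplies a strong $\mu$-approximate bisimulation $\mathcal{R}_\mu$ between $S(\Sigma)$ and $\mathcal{S}(\{S^\eta(\Sigma_i)\}_{i\in[1;N]})$, and then to check conditions~(i) and~(ii) of Problem~\ref{problem} separately. Write $\mathbf{q}=q_0q_1\cdots q_T\in\mathcal{L}_m^y(\Trim(S_{Q,\eta}))$ for the word fixed in the construction, and let $x_{\mathbf{q}}^0=x_\mathbf{q}(0),x_\mathbf{q}(1),\dots,x_\mathbf{q}(T)=x_{\mathbf{q},m}$ be the unique run of the chain $S_\mathbf{q}$ marking it, so that $H_\mathbf{q}(x_\mathbf{q}(t))=q_t$ for every $t$. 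The object driving the proof is the \emph{fixed} symbolic trajectory $\xi_t:=\left([H_{\mathbf{q},i}(x_\mathbf{q}(t))]_{\eta(i)}\right)_{i\in[1;N]}\in X^\eta$ with components $\xi_t^i=[H_{\mathbf{q},i}(x_\mathbf{q}(t))]_{\eta(i)}$, i.e. the componentwise quantization of $q_t$; it depends only on $\mathbf{q}$, not on any control choice, and by (\ref{init}) one has $\mathcal{X}_0=\mathcal{R}_\mu^{-1}(\{\xi_0\})$ and $\mathcal{X}_f=\mathcal{R}_\mu^{-1}(\{\xi_T\})$.

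First I would record two structural facts about the closed loop $\Sigma^C$. Since all local controllers start at $x_\mathbf{q}^0$ and share the same deterministic update $f_{c,i}$ (the successor map of $S_\mathbf{q}$), their states stay synchronized, $x_{c,i}(t)=x_\mathbf{q}(t)$ for all $i$ and $t$. Moreover, because $\mathbf{q}$ is marked by $\Trim(S_{Q,\eta})$, each of its transitions survives trimming, hence belongs to $\rTo_{Q,\eta}$ and satisfies (\ref{condfusion}); by (\ref{I}) and (\ref{condCi0}) this means that for every $i$ and every $t<T$ there exists $u_i\in U_i^\eta$ realizing the local transition $\xi_t^i\rTo_{\eta,i}^{(v_i,u_i)}\xi_{t+1}^i$ with $v_i=(\xi_t^{j})_{j\in\mathcal{N}(i)}$. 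Hence $h_{c,i}(x_\mathbf{q}(t))\neq\varnothing$ for all $t<T$, so (\ref{Cnonblock}) holds along the chain and any trajectory may be prolonged until the controller state reaches $x_{\mathbf{q},m}$, where the chain terminates.

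The core of the argument is a bisimulation induction. Take any trajectory $x(\cdot)$ of $\Sigma^C$ as in (\ref{traj}) with $x(0)\in\mathcal{X}_0$, so $(x(0),\xi_0)\in\mathcal{R}_\mu$. Assuming $(x(t),\xi_t)\in\mathcal{R}_\mu$, any admissible input $u(t)=(u_1(t),\dots,u_N(t))$ with $u_i(t)\in h_{c,i}(x_\mathbf{q}(t))$ realizes, through Definition~\ref{lab} and the local transitions above, the single network transition $\xi_t\rTo_\eta^{u(t)}\xi_{t+1}$; this is precisely where the fixed symbolic path is followed regardless of the nondeterministic choice of inputs. Invoking the symbolic-to-continuous part of the bisimulation $\mathcal{R}_\mu$, together with determinism of $S(\Sigma)$ whose only $u(t)$-successor of $x(t)$ is $x(t+1)=f(x(t),u(t))$, yields $(x(t+1),\xi_{t+1})\in\mathcal{R}_\mu$. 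Thus $(x(t),\xi_t)\in\mathcal{R}_\mu$ throughout the trajectory's domain. By (\ref{relinit}) and property~(i) of $V$ this gives $\mathbf{d}(x(t),\xi_t)\le\mu$, while rounding onto $\eta(i)\mathbb{Z}^{n_i}$ gives $\mathbf{d}(\xi_t,q_t)\le\Vert\eta\Vert/2$; the triangle inequality and (\ref{condMain2}) then deliver $\Vert x(t)-q_t\Vert\le\mu+\Vert\eta\Vert/2\le\theta$. Since $\mathbf{q}\in\mathcal{L}_m^y(\Trim(S_{Q,\eta}))\subseteq\mathcal{L}_m^y(S_Q)=L_Q$, the word $q_0\cdots q_{t_f}$ witnesses condition~(ii).

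Condition~(i) then follows from the same induction by a case split on the trajectory length $t_f$. If $t_f=T$, then $(x(T),\xi_T)\in\mathcal{R}_\mu$ gives $x(t_f)\in\mathcal{R}_\mu^{-1}(\{\xi_T\})=\mathcal{X}_f$. If $t_f<T$, the nonblockingness established above extends $x(\cdot)$ to a continuation $x'(\cdot)$ of length $t'_f=T>t_f$ whose terminal state again satisfies $(x'(T),\xi_T)\in\mathcal{R}_\mu$, hence $x'(t'_f)\in\mathcal{X}_f$; and $t_f>T$ cannot occur since $x_{\mathbf{q},m}$ is terminal in $S_\mathbf{q}$. I expect the main obstacle to be the bookkeeping of the third paragraph: making fully precise that an arbitrary admissible input of the decentralized controller, assembled from the local output sets $h_{c,i}$, induces exactly one network transition $\xi_t\rTo_\eta\xi_{t+1}$, and invoking the bisimulation in the correct direction so that determinism of $\Sigma$ pins down the continuation. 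Everything else reduces to the triangle-inequality estimate governed by (\ref{condMain2}).
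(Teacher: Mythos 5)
Your proposal is correct and follows essentially the same route as the paper's proof: both arguments track the fixed symbolic trajectory $\xi_t=([H_{\mathbf{q},i}(x_{\mathbf{q}}^t)]_{\eta(i)})_{i\in[1;N]}$, propagate $(x(t),\xi_t)\in\mathcal{R}_\mu$ by induction using the fact that every transition of $S_{\mathbf{q}}$ satisfies (\ref{condfusion}) (so $h_{c,i}\neq\varnothing$ and the network transition $\xi_t\rTo_\eta^{u(t)}\xi_{t+1}$ is realized), and close with the triangle inequality $\mu+\Vert\eta\Vert/2\leq\theta$ and nonblockingness of $S_{\mathbf{q}}$. The only cosmetic difference is that you resolve the induction step via the symbolic-to-continuous direction of $\mathcal{R}_\mu$ plus determinism of $S(\Sigma)$, whereas the paper uses the continuous-to-symbolic direction plus determinism of $\mathcal{S}(\{S^{\eta}(\Sigma_i)\}_{i\in[1;N]})$; both are valid since both systems are deterministic.
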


\begin{proof}
Since assumption of Proposition \ref{ThSCC} holds, by (\ref{statem}) we get $S(\Sigma) \cong_{\mu} \mathcal{S}(\{S^{\eta}(\Sigma_i)\}_{i\in [1;N]})$; we consider $\mathcal{R}_{\mu}$ in (\ref{relinit}) as strong $\mu$--approximate bisimulation relation between $S(\Sigma)$ and $\mathcal{S}(\{S^{\eta}(\Sigma_i)\}_{i\in [1;N]})$. 
Consider any trajectory $x(.)$ of $\Sigma^C$ as in (\ref{traj}) with initial condition $x(0)=(x_1 (0),x_2 (0),...,x_N (0))\in\mathcal{X}_0$. 
Pick $\xi(0)=(\xi_1(0),\xi_2(0),...,\xi_N(0))$ such that 
$\xi_i(0)=[H_{\mathbf{q},i}(x_{\mathbf{q}}^0)]_{\eta(i)}$, $i\in [1;N]$. 
By definition of $\mathcal{X}_0$ in (\ref{init}) we get:
\begin{equation}
(x(0),\xi(0)) \in \mathcal{R}_{\mu}.\label{condth1}
\end{equation}
\noindent Define $q_0 =H_{\mathbf{q}}(x^0_{\mathbf{q}})$. By (\ref{condth1}), definition of $\mathcal{R}_{\mu}$ and of $\xi(0)$ we get
\[
\begin{array}
{rcl}
\Vert x(0) - \xi(0) \Vert & \leq & \mu,\\
\Vert \xi(0) - q_0 \Vert &   =  & \Vert \xi(0) - H_{\mathbf{q}}(x^0_{\mathbf{q}}) \Vert\\
                         &   =  & \max_{i\in [1;N]} \Vert \xi_i(0) - H_{\mathbf{q},i}(x^0_{\mathbf{q}}) \Vert \\
												 & \leq & \max_{i\in [1;N]}  \eta(i)/2=\Vert \eta \Vert /2 																				
\end{array}
\]
which, combined with (\ref{condMain2}), yields
\begin{equation}
\label{diff0}
\begin{array}
{rcl}
\Vert x(0) - q_0 \Vert &   \leq  & \Vert x(0) - \xi(0) \Vert + \Vert \xi(0) -q_0 \Vert\\
                       &   \leq  & \mu + \Vert \eta \Vert /2 \leq \theta .												
\end{array}
\end{equation}
By the nonblocking property of $S_\mathbf{q}$, either (case $1$) $x^0_{\mathbf{q}}\in X_{\mathbf{q},m}$ or (case $2$) there exists a transition $x^0_{\mathbf{q}} \rTo_{\mathbf{q}} x^1_{\mathbf{q}}$. In case $1$, by (\ref{condth1}), definition of $\xi(0)$ and (\ref{init}), $x(0)\in\mathcal{X}_f$ and condition (i) of Problem \ref{problem} holds for $t=t_f=0$. Moreover, since $q_0 \in L_Q$ and by (\ref{diff0}), condition (ii) of Problem \ref{problem} holds as well for $t=t_f=0$. 
We now address case $2$. By definition of $S_\mathbf{q}$, transition $x^0_{\mathbf{q}} \rTo_{\mathbf{q}} x^1_{\mathbf{q}}$ satisfies condition (\ref{condfusion}). Hence, by (\ref{deccontr0}) for all $i\in [1;N]$ there exists $u_i(0)\in U^\eta_i$ satisfying 
\begin{equation}
\label{ggio}
[H_{\mathbf{q},i}(x_{\mathbf{q}}^0)]_{\eta(i)} \rTo_{\eta,i}^{(v_i(0),u_i(0))} {[H_{\mathbf{q},i}(x^1_{\mathbf{q}})]_{\eta(i)}}
\end{equation}
where 
$v_i(0)=([H_{\mathbf{q},j_1}(x_{\mathbf{q}}^0)]_{\eta(j_1)},...,[H_{\mathbf{q},j_{l_i}}(x_{\mathbf{q}}^0)]_{\eta(j_{l_i})})$,
for all $j_s\in\mathcal{N}(i)$. 
Hence, by definition of $h_{c,i}$ in (\ref{deccontr}), we get 
$
u_i(0) \in h_{c,i}(x_{\mathbf{q}}^0)\neq \varnothing$, 
for all $i\in [1;N]$,
from which, condition (\ref{Cnonblock}) holds for $t=0$. Let $u(0)=(u_1(0),$ $u_2(0),...,u_N(0))$ and $x(1)=f(x(0),u(0))$. By Definition \ref{syssigma} we get:
\begin{equation}
\label{x(1)}
x(0) \rTo^{u(0)} x(1).
\end{equation}
Pick $\xi(1)=(\xi_1(1),\xi_2(1),...,\xi_N(1))$ such that $\xi_i(1)=[H_{\mathbf{q},i}(x_{\mathbf{q}}^1)]_{\eta(i)}$, $i\in [1;N]$.
By (\ref{ggio}), definitions of $\xi_i(0)$ and $\xi_i(1)$ we 
get
\[
\xi_i(0) \rTo^{(v_i(0),u_i(0))}_{\eta,i} \xi_i(1),
\]
which implies, by definition of $u(0)$ and Definition \ref{lab}
\begin{equation}
\label{xi(1)}
\xi(0) \rTo^{u(0)}_{\eta} \xi(1),
\end{equation}
i.e. the above transition is in $\mathcal{S}(\{S^{\eta}(\Sigma_i)\}_{i\in [1;N]})$. 
By (\ref{x(1)}), (\ref{xi(1)}), determinism of $S^\eta(\Sigma)$, and definition of $\mathcal{R}_{\mu}$ we get:
\[
(x(1),\xi(1)) \in \mathcal{R}_{\mu}.
\]
\noindent We now use induction and show that if the following conditions (H1), (H2) and (H3) hold for some $\tau \in \mathbb{N}_0$\\
(H1) $(x(\tau),\xi(\tau)) \in \mathcal{R}_{\mu}$, where
\[
\begin{array}
{l}
\xi_i(\tau)=[H_{\mathbf{q},i}(x_{\mathbf{q}}^{\tau})]_{\eta(i)}, i\in [1;N],\\
\xi(\tau)=(\xi_1(\tau),\xi_2(\tau),...,\xi_N(\tau)),
\end{array}
\]
(H2) $x^0_{\mathbf{q}} \rTo_{\mathbf{q}} x^1_{\mathbf{q}}\rTo_{\mathbf{q}} {...} \rTo_{\mathbf{q}} x^{\tau}_{\mathbf{q}}$,\\
(H3) Condition (\ref{condP2}) is satisfied for all $t\in [0;\tau]$ where $q_t$ is defined by $q_t = H_{\mathbf{q}}(x^t_{\mathbf{q}})$, $t\in [0;\tau]$,\\
then one of the following conditions (T1) or (T2) hold: \\
(T1) $x(\tau)\in \mathcal{X}_f$ and the word $q_0 q_1 ... q_{\tau} \in L_Q$;\\
(T2) condition (\ref{Cnonblock}) holds for $t=\tau$ and for any $u_i(\tau)\in h_{c,i}(x^{\tau}_{\mathbf{q}})$, $i\in [1;N]$, by setting 
\begin{eqnarray}
&& \hspace{-5mm}
u(\tau)=(u_1(\tau),u_2(\tau),...,u_N(\tau)),\label{u(tau)}\\
&& \hspace{-5mm}
x(\tau+1)=f(x(\tau),u(\tau)),\label{x(tau+1)} \\
&& \hspace{-5mm} 
\xi_i(\tau+1)=[H_{\mathbf{q},i}(x_{\mathbf{q}}^{\tau+1})]_{\eta(i)}, i\in [1;N],\label{xii(tau+1)}\\
&& \hspace{-5mm}
\xi(\tau+1)=(\xi_1(\tau+1),\xi_2(\tau+1),...,\xi_N(\tau+1)),\label{xi(tau+1)}
\end{eqnarray}
the following conditions hold:\\
(T2.1) $(x(\tau+1),\xi(\tau+1)) \in \mathcal{R}_{\mu}$,\\
(T2.2) $x^0_{\mathbf{q}} \rTo_{\mathbf{q}} x^1_{\mathbf{q}}\rTo_{\mathbf{q}} {...} \rTo_{\mathbf{q}} x^{\tau}_{\mathbf{q}} \rTo_{\mathbf{q}} x^{\tau+1}_{\mathbf{q}}$,\\
(T2.3) Condition (\ref{condP2}) is satisfied for all $t\in [0;\tau+1]$ where $q_t$ is defined by $q_t = H_{\mathbf{q}}(x^t_{\mathbf{q}})$, $t\in [0;\tau+1]$.\\
Let us assume then that (H1)--(H3) hold. 
By the nonblocking property of $S_\mathbf{q}$, either (case $1$) $x^{\tau}_{\mathbf{q}}\in X_{\mathbf{q},m}$ or (case $2$) there exists a transition $x^{\tau}_{\mathbf{q}} \rTo_{\mathbf{q}} x^{\tau+1}_{\mathbf{q}}$. 
We start by addressing case $1$. By (H1) and the definition of $\mathcal{X}_f$ in (\ref{init}) we get $x(\tau)\in\mathcal{X}_f$. Hence, condition (i) of Problem \ref{problem} holds for $t_f=\tau$. Since $x^{\tau}_{\mathbf{q}}\in X_{\mathbf{q},m}$ then $q_0 q_1 ... q_{\tau} \in L_Q$ and (T1) is proven. Moreover by (H3), condition (ii) of Problem \ref{problem} holds for $t_f=\tau$. 
We now address case $2$. First of all (T2.2) holds. By definition of $S_\mathbf{q}$, transition $x^{\tau}_{\mathbf{q}} \rTo_{\mathbf{q}} x^{\tau+1}_{\mathbf{q}}$ satisfies condition (\ref{condfusion}). 
Hence, by (\ref{deccontr0}), for all $i\in[1;N]$ there exists $u_i(\tau)\in U^\eta_i$ satisfying 
\begin{equation}
\label{trans1i}
[H_{\mathbf{q},i}(x_{\mathbf{q}}^{\tau})]_{\eta(i)} \rTo_{\eta,i}^{(v_i(\tau),u_i(\tau))} {[H_{\mathbf{q},i}(x^{\tau+1}_{\mathbf{q}})]_{\eta(i)}}
\end{equation}
where 
$v_i(\tau)=([H_{\mathbf{q},j_1}(x_{\mathbf{q}}^{\tau})]_{\eta(j_1)},...,[H_{\mathbf{q},j_{l_i}}(x_{\mathbf{q}}^{\tau})]_{\eta(j_{l_i})})$,
for all $j_s\in\mathcal{N}(i)$.
Hence, by definition of $h_{c,i}$ in (\ref{deccontr}), 
$u_i(\tau) \in h_{c,i}(x_{\mathbf{q}}^{\tau})\neq \varnothing$, for all $i\in [1;N]$ from which,
condition (\ref{Cnonblock}) holds for $t=\tau$ as requested in (T2). 
By (\ref{x(tau+1)}) and Definition \ref{syssigma} we get:
\begin{equation}
\label{x(tau)}
x(\tau) \rTo^{u(\tau)} x(\tau+1).
\end{equation}
By (\ref{xii(tau+1)}) and (\ref{trans1i}) we get 
\[
\xi_i(\tau) \rTo^{(v_i(\tau),u_i(\tau))}_{\eta,i} \xi_i(\tau+1),i\in [1;N]
\]
which by (\ref{u(tau)}), (\ref{xi(tau+1)}) and Definition \ref{lab} implies
\begin{equation}
\label{xxi(tau+1)}
\xi(\tau) \rTo^{u(\tau)}_{\eta} \xi(\tau+1).
\end{equation}
By (\ref{x(tau)}), (\ref{xxi(tau+1)}), determinism of $S^\eta(\Sigma)$, and definition of $\mathcal{R}_{\mu}$, we get (T2.1). 
Moreover, set $q_{\tau+1}=H_{\mathbf{q}}(x^{\tau+1}_{\mathbf{q}})$. By (T2.1) and definition of $\xi(\tau+1)$ in (\ref{xi(tau+1)}) we get
\[
\begin{array}
{rcl}
\Vert x(\tau+1) - \xi(\tau+1) \Vert & \leq & \mu;\\
\Vert \xi(\tau+1) - q_{\tau+1} \Vert &   =  & \Vert \xi(\tau+1) - H_{\mathbf{q}}(x^{\tau+1}_{\mathbf{q}})) \Vert\\
                         &   =  & \max_{i\in [1;N]} \Vert \xi_i(\tau+1) - \\
												 &      & H_{\mathbf{q},i}(x^{\tau+1}_{\mathbf{q}})) \Vert \\
												 & \leq & \max_{i\in [1;N]}  \eta(i)/2=\Vert \eta \Vert /2 ,																					
\end{array}
\]
which, combined with (\ref{condMain2}) yields:
\begin{equation}
\label{diff00}
\begin{array}
{rcl}
\Vert x(\tau+1) - q_{\tau+1} \Vert &   \leq  & \Vert x(\tau+1) - \xi(\tau+1) \Vert + \\
                                   &         & \Vert \xi(\tau+1) -q_{\tau+1} \Vert\\
                       &   \leq  & \mu + \Vert \eta \Vert /2 \leq \theta .												
\end{array}
\end{equation}
The above inequality combined with (H3) implies (T2.3). Thus, (T2) is proven. \\
In order to conclude the proof we need to show that there exists a time $t_f\in\mathbb{N}$ such that $x(t_f)\in\mathcal{X}_f$. Since $S_\mathbf{q}$ is nonblocking there exists a time $t_f\in\mathbb{N}$ such that $x^{t_f}_{\mathbf{q}}\in X_{\mathbf{q},m}$ which implies by (H1) and (\ref{init}) that $x(t_f)\in\mathcal{X}_f$. 
\end{proof}

By Theorem \ref{Thmaindec}, definition of $\Trim(S_{Q,\eta})$ and Definition \ref{DefPartDec}, it is readily seen that: 

\begin{corollary}
\label{ThPartDec}
Suppose that Assumption \ref{A1} holds and select $\eta$ as required in Theorem \ref{Thmaindec}. Then, 
for any word $\mathbf{q} \in \mathcal{L}_m^y(\Trim(S_{Q,\eta}))$ there exists a triplet $(C,\mathcal{X}_0,\mathcal{X}_f)$ enforcing it on $\Sigma$, within accuracy $\theta$. 
\end{corollary}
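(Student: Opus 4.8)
The plan is to specialize the construction underlying Theorem \ref{Thmaindec} to the given word $\mathbf{q}$ and then read off the required trajectory from the induction carried out there. Concretely, I would write $\mathbf{q}=q_0 q_1 \ldots q_{t_f} \in \mathcal{L}_m^y(\Trim(S_{Q,\eta}))$ and let $S_{\mathbf{q}}$ be the symbolic, accessible, nonblocking system in (\ref{Sq}) marking exactly $\mathbf{q}$, i.e. with $\mathcal{L}_m^y(S_{\mathbf{q}})=\{\mathbf{q}\}$. Using $S_{\mathbf{q}}$ I would define the sets $\mathcal{X}_0$ and $\mathcal{X}_f$ as in (\ref{init}) and the local controllers $C_i$, hence $C=(C_1,\ldots,C_N)$, as in (\ref{deccontr}). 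Since these are precisely the objects built in Theorem \ref{Thmaindec} for this choice of $\mathbf{q}$, that theorem already guarantees that the triplet $(C,\mathcal{X}_0,\mathcal{X}_f)$ solves Problem \ref{problem}; the corollary only adds that the trajectory so produced tracks the \emph{specific} word $\mathbf{q}$.

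First I would check that $\mathcal{X}_0\neq\varnothing$, so that Definition \ref{DefPartDec} can actually be instantiated. Taking the lattice point $\xi(0)=(\xi_1(0),\ldots,\xi_N(0))$ with $\xi_i(0)=[H_{\mathbf{q},i}(x_{\mathbf{q}}^0)]_{\eta(i)}$, the lower bound in Assumption \ref{A1}(i) forces $V(\xi(0),\xi(0))=0\leq\underline{\alpha}(\mu)$, so $(\xi(0),\xi(0))\in\mathcal{R}_\mu$ and therefore $\xi(0)\in\mathcal{X}_0$ by (\ref{init}). This guarantees the existence of some $x(0)\in\mathcal{X}_0$ from which to launch a trajectory.

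Next I would extract the witnessing trajectory directly from the proof of Theorem \ref{Thmaindec}. Fixing any $x(0)\in\mathcal{X}_0$, the induction over (H1)--(H3)$\Rightarrow$(T1)--(T2) produces a trajectory $x(\cdot)$ of $\Sigma^C$ together with the symbolic states $\xi(\tau)=([H_{\mathbf{q},1}(x_{\mathbf{q}}^{\tau})]_{\eta(1)},\ldots,[H_{\mathbf{q},N}(x_{\mathbf{q}}^{\tau})]_{\eta(N)})$, and it terminates at the first time $t_f$ with $x_{\mathbf{q}}^{t_f}\in X_{\mathbf{q},m}$; at that time (H1) and (\ref{init}) yield $x(t_f)\in\mathcal{X}_f$. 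The crucial point is that, because $S_{\mathbf{q}}$ is deterministic and marks only $\mathbf{q}$, the path $x_{\mathbf{q}}^0 \rTo_{\mathbf{q}} x_{\mathbf{q}}^1 \rTo_{\mathbf{q}} \ldots \rTo_{\mathbf{q}} x_{\mathbf{q}}^{t_f}$ is the unique accepting run, so its label sequence $q_t=H_{\mathbf{q}}(x_{\mathbf{q}}^t)$, $t\in[0;t_f]$, is exactly $\mathbf{q}$. Combined with (H3)/(T2.3), which supply $\Vert x(t)-q_t\Vert\leq\theta$ for all $t\in[0;t_f]$, this shows that $x(\cdot)$ satisfies (\ref{condP2}) against $\mathbf{q}$ itself rather than against some unspecified word of $L_Q$. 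By Definition \ref{DefPartDec} the triplet $(C,\mathcal{X}_0,\mathcal{X}_f)$ therefore enforces $\mathbf{q}$ on $\Sigma$ within accuracy $\theta$.

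The only point requiring care is the identification of the generated word with $\mathbf{q}$, and this is exactly what the single-run determinism of $S_{\mathbf{q}}$ buys us: since each state has a unique successor and a unique output, the induction cannot drift onto a different word of $L_Q$, so the inequalities $\Vert x(t)-q_t\Vert\leq\theta$ genuinely certify tracking of $\mathbf{q}$. Everything else is a direct reuse of Theorem \ref{Thmaindec}, so no new estimates are needed beyond those already established there; this is why the statement can legitimately be claimed to follow "by Theorem \ref{Thmaindec}, definition of $\Trim(S_{Q,\eta})$ and Definition \ref{DefPartDec}."
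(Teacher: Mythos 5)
Your argument is correct and takes essentially the same route as the paper, which simply asserts the corollary is ``readily seen'' from Theorem \ref{Thmaindec}, the definition of $\Trim(S_{Q,\eta})$ and Definition \ref{DefPartDec}. You have merely made explicit the two details that assertion leaves implicit --- non-emptiness of $\mathcal{X}_0$ via $(\xi(0),\xi(0))\in\mathcal{R}_\mu$, and the identification of the tracked word with $\mathbf{q}$ itself through the single-successor structure of $S_{\mathbf{q}}$ --- both of which are handled correctly.
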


By the proof of Theorem \ref{Thmaindec} (see the inequalities (\ref{diff0}) and (\ref{diff00})) the following result holds:
\begin{corollary}
\label{CoromaindecBis}
Suppose that Assumption \ref{A1} holds. For any desired accuracy $\theta\in\mathbb{R}^+$ select $\mu\in\mathbb{R}^+$ and $\eta\in \mathbb{R}^+_N$ satisfying (\ref{statem}) and
\begin{equation}
\label{statem3}
\mu\leq \theta.
\end{equation}
If $Y_Q\subset X^{\eta}$ then sets $\mathcal{X}_0$ and $\mathcal{X}_f$ in (\ref{init}) and controllers $C_i$ in (\ref{Ci}) specified by (\ref{deccontr}) solve Problem \ref{problem}.
\end{corollary}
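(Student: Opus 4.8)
The plan is to reuse the proof of Theorem~\ref{Thmaindec} essentially verbatim, isolating the single place where the stronger hypothesis~(\ref{condMain2}) was actually needed. Observe that the entire structural part of that proof---nonblockingness of the controllers (condition~(\ref{Cnonblock})), the induction establishing (H1)--(H3) and (T2.1)--(T2.2), the fact that a marked state $x^{t_f}_{\mathbf{q}}\in X_{\mathbf{q},m}$ is eventually reached so that $x(t_f)\in\mathcal{X}_f$, and hence condition~(i) of Problem~\ref{problem}---relies only on~(\ref{statem}) through the strong $\mu$--approximate bisimulation $\mathcal{R}_{\mu}$ of Proposition~\ref{ThSCC} and on the nonblocking property of $S_{\mathbf{q}}$. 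None of this uses the relation between $\mu$, $\eta$ and $\theta$, so it carries over unchanged under the weaker requirement~(\ref{statem3}).

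The only use of~(\ref{condMain2}) is in the accuracy estimates~(\ref{diff0}) and~(\ref{diff00}), which establish condition~(ii)/(\ref{condP2}). There the half--cell term $\Vert\eta\Vert/2$ enters solely through the quantization gap $\Vert\xi(t)-q_t\Vert$, where $\xi_i(t)=[H_{\mathbf{q},i}(x^t_{\mathbf{q}})]_{\eta(i)}$ and $q_t=H_{\mathbf{q}}(x^t_{\mathbf{q}})$. My plan is to show that under the new hypothesis $Y_Q\subset X^{\eta}$ this gap is exactly zero. Indeed, every enforced word has symbols in $Y_Q$, since $\mathcal{L}_m^y(S_Q)=L_Q\subset Y_Q^{\ast}$ and the labels produced by $H_{\mathbf{q}}$ are drawn from $Y_Q$; hence each $q_t=H_{\mathbf{q}}(x^t_{\mathbf{q}})\in Y_Q\subset X^{\eta}=\bigtimes_{i\in[1;N]}\eta(i)\mathbb{Z}^{n_i}$, so each coordinate satisfies $H_{\mathbf{q},i}(x^t_{\mathbf{q}})\in\eta(i)\mathbb{Z}^{n_i}$.

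Since the rounding operator $[\cdot]_{\eta(i)}$ fixes lattice points, $[H_{\mathbf{q},i}(x^t_{\mathbf{q}})]_{\eta(i)}=H_{\mathbf{q},i}(x^t_{\mathbf{q}})$, whence $\xi_i(t)=q^i_t$ for every $i$ and $\Vert\xi(t)-q_t\Vert=0$. The triangle inequality then replaces~(\ref{diff0}) and~(\ref{diff00}) by $\Vert x(t)-q_t\Vert\leq\Vert x(t)-\xi(t)\Vert+\Vert\xi(t)-q_t\Vert\leq\mu\leq\theta$, using only $(x(t),\xi(t))\in\mathcal{R}_{\mu}$ (which gives $\Vert x(t)-\xi(t)\Vert\leq\mu$ via~(\ref{relinit}) and Assumption~\ref{A1}(i)) together with the weaker bound~(\ref{statem3}). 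This yields condition~(\ref{condP2}) at every time, completing the verification of condition~(ii). I do not expect a genuine obstacle here: the whole content is the elementary observation that aligning the specification alphabet $Y_Q$ with the quantization lattice $X^{\eta}$ annihilates the half--cell quantization error, and the only fact to be checked is the idempotence of $[\cdot]_{\eta(i)}$ on $\eta(i)\mathbb{Z}^{n_i}$, which is immediate from its definition in the Appendix.
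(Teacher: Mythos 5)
Your argument is correct and is exactly the one the paper intends: the paper proves this corollary simply by pointing back to inequalities~(\ref{diff0}) and~(\ref{diff00}) in the proof of Theorem~\ref{Thmaindec}, and your observation that $Y_Q\subset X^{\eta}$ makes $\xi(t)=q_t$ (so the $\Vert\eta\Vert/2$ term disappears and $\Vert x(t)-q_t\Vert\leq\mu\leq\theta$) is precisely the content being invoked there. No gap; your write-up just makes explicit what the paper leaves implicit.
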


We conclude this section by discussing the choice in the class of controllers $C_i$ in (\ref{Ci}).
\begin{remark}
\label{remstatic}
The class of local controllers $C_i$ in (\ref{Ci}) and specified by (\ref{deccontr}), shown in Theorem \ref{Thmaindec} to solve Problem \ref{problem}, comes out from the general class of specifications we consider and shares analogies with the theory of supervisory control, see also Remark \ref{remTSC}. 
When the word $\mathbf{q}=q_0 q_1 ... q_{t_f}$ used in (\ref{Sq}) to define system $S_\mathbf{q}$ satisfies the following property
\begin{equation}
\label{propq}
q_t=q_{t'} \Rightarrow q_{t+1}=q_{t'+1},\forall t,t'\in [0;t_f-1], t_f\geq 1,
\end{equation}
it is possible to show by a slight modification of the proof of Theorem \ref{Thmaindec} that dynamic and open--loop local controllers $C_i$ can be replaced by static local state feedback controllers $C'_i$ in the form of
\begin{equation}
\label{Cibis}
u_i \in C'_i(x_i,x_{j_1},...,x_{j_{l_i}}), j_s\in\mathcal{N}(i), i\in [1;N],
\end{equation}
as often assumed in decentralized control of dynamical systems, 
where the partial maps $C'_i:\mathbb{R}^{n_i} \times (\bigtimes_{j\in\mathcal{N}(i)} \mathbb{R}^{n_j} )\rightarrow 2^{\mathcal{U}_i}$ in (\ref{Cibis}) are 
specified for any state $x_{\mathbf{q}}$ of system $S_{\mathbf{q}}$ in (\ref{Sq}) by:
\[
\begin{array}
{l}
(x_1,...,x_N)\in \mathcal{R}_{\mu}^{-1}(([H_{\mathbf{q},{1}}(x_{\mathbf{q}})]_{\eta(1)},...,[H_{\mathbf{q},{N}}(x_{\mathbf{q}})]_{\eta(N)})),\\
{[H_{\mathbf{q},i}(x_{\mathbf{q}})]_{\eta(i)}} \rTo_{\eta,i}^{(v_i,u_i)} {[H_{\mathbf{q},i}(x^+_{\mathbf{q}})]_{\eta(i)}}$, $i\in [1;N],
\end{array}
\]
where $v_i=([H_{\mathbf{q},{j_1}}(x_{\mathbf{q}})]_{\eta(j_1)},...,[H_{\mathbf{q},{j_{l_i}}}(x_{\mathbf{q}})]_{\eta(j_{l_i})})
$, $j_s\in\mathcal{N}(i)$, $i\in [1;N]$. When instead, word $\mathbf{q}$ violates condition (\ref{propq}), the class of controllers $C'_i$ is not general enough for enforcing $\mathbf{q}$ because if $q_t=q_{t'}$ with $t \neq t'$ and $q_{t+1}\neq q_{t'+1}$, controllers $C'_i$ need to enforce transition from $q_t$ to $q_{t+1}$ at time $t$, and transition from $q_{t'}=q_{t}$ to $q_{t'+1}\neq q_{t+1}$ at time $t'\neq t$. 
%
\end{remark}

\section{Comparison with centralized control architectures} \label{sec5}
In this section we establish connections with centralized control architectures.   
A centralized controller for $\Sigma$ is specified by the dynamic open--loop controller:

\begin{equation}
\label{Cc}
C_c:
\left\{
\begin{array}
{l}
x_{c}(t+1) \in f_{c}(x_{c}(t)),\\
u(t) \in h_{c}(x_{c}(t))\subseteq \mathcal{U},\\
x_{c}(0) \in X_{c}^0,\\
x_{c}(t) \in X_{c},t\in\mathbb{N}_0,
\end{array}
\right. 
\end{equation}
where $x_{c}(t)$ is the state of $C_c$ and $u(t)$ is the output of $C_c$ at time $t$. While state evolution of $C_c$ in (\ref{Cc}) is nondeterministic, state evolution of $C_i$ in (\ref{Ci}) is deterministic. This is a consequence of the fact that local controllers $C_i$ need to agree in advance of which word of the specification to enforce.  
We denote by $\Sigma^{C_c}$ the control system obtained as coupling Eqns. (\ref{sigma}) and (\ref{Cc}). Problem \ref{problem} rewrites in a centralized setting as:

\begin{problem}
\label{problemc}
Given $\Sigma$ in (\ref{sigma}), $L_Q$ in (\ref{spec}) and $\theta\in\mathbb{R}^+$, find $\mathcal{X}_{0,c} \subseteq \mathbb{R}^n$, $\mathcal{X}_{f,c} \subseteq \mathbb{R}^n$ and $C_c$ in (\ref{Cc}) such that:
\begin{itemize}
\item[(i)] for any trajectory $x(\cdot)$ of $\Sigma^{C_c}$ as in (\ref{traj}) with $x(0)\in \mathcal{X}_{0,c}$, either $x(t_f)\in \mathcal{X}_{f,c}$ or there exists a continuation $x'(\cdot)$ of $x(\cdot)$, as in (\ref{trajbis}), such that $x'(t'_f)\in \mathcal{X}_{f,c}$;
\item[(ii)] for any trajectory $x(\cdot)$ of $\Sigma^{C_c}$ as in (\ref{traj}) with $x(0)\in \mathcal{X}_{0,c}$ and $x(t_f)\in \mathcal{X}_{f,c}$, there exists a word $q_0 q_1 ... q_{t_f} \in L_Q$ such that for all times $t \in [0;t_f]$ condition (\ref{condP2}) holds.
\end{itemize}
\end{problem}
For later purposes, we need the following
\begin{definition}
\label{DefPartCen}
Language $\mathbf{L}_{\mathbf{Q}}(\Sigma^{C_c})$ is the collection of all words $q_0 q_1 ... q_{t_f}\in L_Q$ for which there exists a 
trajectory $x(\cdot)$ of $\Sigma^{C_c}$ as in (\ref{traj}) with $x(0)\in \mathcal{X}_{0,c}$ and $x(t_f)\in \mathcal{X}_{f,c}$ satisfying (\ref{condP2}) for all times $t \in [0;t_f]$.
\end{definition}
By the above definition, $\mathbf{L}_{\mathbf{Q}}(\Sigma^{C_c})$ represents the part of $L_Q$ that can be enforced on $\Sigma$ by $C_c$.
The solution to Problem \ref{problemc} mimicks the one given for the decentralized case. Consider
\[
\mathcal{I}_c: (\rTo_Q) \times \mathbb{R}^+_N\rightarrow \{\tt{True},\tt{False}\}.
\]
For any transition $x_Q \rTo_Q x_Q^+$ of system $S_Q$ defined in the previous section
\begin{equation}
\label{condFusC}
\mathcal{I}_c (x_Q \rTo_Q x_Q^+,\eta)=\tt{True}
\end{equation}
if there exists $u=(u_1,u_2,...,u_N)\in U^\eta$ such that conditions (\ref{condCi0}) with $i\in [1;N]$ are jointly satisfied, and 
$\mathcal{I}_c (x_Q \rTo_Q x_Q^+,\eta)=\tt{False}$, otherwise. 
Define the subsystem 
\begin{equation}
\label{SQetac}
S^c_{Q,\eta}=(X^c_Q,X_Q^{0,c},U^c_Q,\rTo_{Q,\eta,c},X_{Q,m,c},Y^c_Q,H^c_Q)
\end{equation}
of $S_Q$, where $\rTo_{Q,\eta,c}\subseteq \rTo_{Q}$ contains all and only transitions $x_Q \rTo_Q x_Q^+$ of $S_Q$ satisfying (\ref{condFusC}). Define 
\begin{equation}
\label{trimcentr}
\Trim(S^c_{Q,\eta})=(X_{\nb},X_{\nb,0},U_{\nb},\rTo_{\nb},X_{\nb,m},Y_{\nb},H_{\nb}),
\end{equation}
and the following sets:
\begin{equation}
\label{initc}
\begin{array}
{rcl}
\mathcal{X}_0^c & = &\mathcal{R}_{\mu}^{-1} (\bigcup_{x_{\nb} \in X_{\nb,0}}
\bigtimes_{i\in [1;N]} \{[H_{\nb,i}(x_{\nb})]_{\eta(i)}\}),\\
\mathcal{X}_f^c & = &\mathcal{R}_{\mu}^{-1} (\bigcup_{x_{\nb} \in X_{\nb,m}}
\bigtimes_{i\in [1;N]} \{[H_{\nb,i}(x_{\nb})]_{\eta(i)}\}).
\end{array}
\end{equation}

\noindent For any $i\in [1;N]$, function 
\[
H_{\nb,i}:X_{\nb} \rightarrow \mathbb{R}^{n_i}
\]
denotes the "projection" of function $H_{\nb}$ onto $\mathbb{R}^{n_i}$, i.e. for all $x_{\nb} \in X_{\nb}$, $H_{\nb,i}(x_{\nb})=q^i$ if $H_{\nb}(x_{\nb})=(q^1,q^2,...,q^N)$. Entities defining controller $C_c$ in (\ref{Cc}) are then specified by:
\begin{equation}
\label{contrc}
\begin{array}
{l}
X_{c}^0 = X_{\nb,0} ,\\
X_{c}=X_{\nb},\\
f_{c}(x_{\nb})=\{x_{\nb}^+\in X_{\nb} |  x_{\nb} \rTo_{\nb} x_{\nb}^+\},\\
h_{c}(x_{\nb})=\\
\left\{
\begin{array}
{c}
u=(u_1,u_2,...,u_N)\in U^{\eta}|\exists x_{\nb}^+ \in f_{c}(x_{\nb}) \text{ s.t. }\\{[H_{\nb,i}(x_{\nb})]_{\eta(i)}} \rTo_{\eta,i}^{(v_i,u_i)} {[H_{\nb,i}(x^+_{\nb})]_{\eta(i)}}, i\in [1;N]
\end{array}
\right\},\\
\end{array}
\end{equation}
where 
$
v_i=([H_{\nb,j_1}(x_{\nb})]_{\eta(j_1)},...,[H_{\nb,j_{l_i}}(x_{\nb})]_{\eta(j_{l_i})})$ with
$j_s\in\mathcal{N}(i)$. 
The following result holds.

\begin{theorem}
\label{propcentr}
Suppose that Assumption \ref{A1} holds. For any desired accuracy $\theta\in\mathbb{R}^+$ select $\mu\in\mathbb{R}^+$ and $\eta\in \mathbb{R}^+_N$ satisfying (\ref{statem}) and (\ref{condMain2}). Then, sets $\mathcal{X}^c_0$ and $\mathcal{X}^c_f$ in (\ref{initc}) and controller $C_c$ in (\ref{Cc}) specified by (\ref{contrc}) solve Problem \ref{problemc}.
\end{theorem}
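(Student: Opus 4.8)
The plan is to follow the same line of reasoning as in the proof of Theorem~\ref{Thmaindec}, replacing the single-word system $S_{\mathbf{q}}$ by the trim system $\Trim(S^c_{Q,\eta})$ and the deterministic open--loop local controllers by the nondeterministic centralized controller $C_c$. The first observation I would record is structural and simplifies matters considerably: since each matching condition (\ref{condCi0}) constrains only the local input $u_i$ (the vector $v_i$ depending solely on the current state through the projections of $H_Q$) and since $U^\eta=\bigtimes_{i\in[1;N]}U^\eta_i$, a joint input $u=(u_1,\dots,u_N)$ satisfying conditions (\ref{condCi0}) for all $i$ exists if and only if each of these conditions separately admits a local $u_i$. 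Hence the centralized matchability operator coincides with the one in (\ref{I}), i.e. $\mathcal{I}_c=\mathcal{I}$, so that $\rTo_{Q,\eta,c}=\rTo_{Q,\eta}$, whence $S^c_{Q,\eta}=S_{Q,\eta}$ and $\Trim(S^c_{Q,\eta})=\Trim(S_{Q,\eta})$. This identity lets me reuse essentially all the per--step estimates already established in Theorem~\ref{Thmaindec}.

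As in that proof, by (\ref{statem}) and Proposition~\ref{ThSCC} the relation $\mathcal{R}_\mu$ in (\ref{relinit}) is a strong $\mu$--approximate bisimulation between $S(\Sigma)$ and $\mathcal{S}(\{S^\eta(\Sigma_i)\}_{i\in[1;N]})$. I would then fix an arbitrary trajectory $x(\cdot)$ of $\Sigma^{C_c}$ with $x(0)\in\mathcal{X}^c_0$, together with the realized controller--state run $x_{\nb}^0\rTo_{\nb}x_{\nb}^1\rTo_{\nb}\cdots$ in $\Trim(S^c_{Q,\eta})$ and the control sequence $u(\tau)\in h_c(x_{\nb}^\tau)$ it produces. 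By the definition of $\mathcal{X}^c_0$ in (\ref{initc}) the initial controller state $x_{\nb}^0\in X_{\nb,0}$ can be taken so that $x(0)\in\mathcal{R}_\mu^{-1}(\bigtimes_{i\in[1;N]}\{[H_{\nb,i}(x_{\nb}^0)]_{\eta(i)}\})$; setting $\xi_i(0)=[H_{\nb,i}(x_{\nb}^0)]_{\eta(i)}$ and $q_0=H_{\nb}(x_{\nb}^0)$ gives $(x(0),\xi(0))\in\mathcal{R}_\mu$, and the triangle--inequality bounds $\Vert x(0)-\xi(0)\Vert\le\mu$ and $\Vert\xi(0)-q_0\Vert\le\Vert\eta\Vert/2$ combined with (\ref{condMain2}) yield $\Vert x(0)-q_0\Vert\le\theta$, exactly as in (\ref{diff0}).

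The core is then the same induction carried along the realized run. At controller state $x_{\nb}^\tau$, the nonblocking property of $\Trim(S^c_{Q,\eta})$ gives two cases. If $x_{\nb}^\tau\in X_{\nb,m}$, then $x(\tau)\in\mathcal{X}^c_f$ by (\ref{initc}), condition (i) of Problem~\ref{problemc} holds with $t_f=\tau$, and since $\Trim(S^c_{Q,\eta})$ is a subsystem of $S_Q$ with $\mathcal{L}^y_m(S_Q)=L_Q$, the traced word $q_0q_1\cdots q_\tau$ lies in $L_Q$, so condition (ii) follows from the maintained closeness bound. Otherwise a transition $x_{\nb}^\tau\rTo_{\nb}x_{\nb}^{\tau+1}$ is taken; by (\ref{condFusC}) it satisfies $\mathcal{I}_c=\tt{True}$, so a joint $u(\tau)\in h_c(x_{\nb}^\tau)\neq\varnothing$ exists, establishing (\ref{Cnonblock}). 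Applying $u(\tau)$ and using the induced abstraction transition $\xi(\tau)\rTo^{u(\tau)}_\eta\xi(\tau+1)$, the strong bisimulation property together with determinism of $\mathcal{S}(\{S^\eta(\Sigma_i)\}_{i\in[1;N]})$ preserves $(x(\tau+1),\xi(\tau+1))\in\mathcal{R}_\mu$, and the estimate (\ref{diff00}) again gives $\Vert x(\tau+1)-q_{\tau+1}\Vert\le\theta$.

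The step I expect to require the most care is the verification of condition (i) of Problem~\ref{problemc} in its full ``either/or'' form, together with the initialization. Unlike the single path $S_{\mathbf{q}}$ of the decentralized setting, which has a unique initial state and a unique marked run, here the controller $C_c$ is nondeterministic: $X_{\nb,0}$ may contain several initial states, so one must align the controller's initial state with $x(0)$ (as done above), and a given trajectory may follow only a prefix of a marked run. To handle the latter I would invoke coaccessibility of $\Trim(S^c_{Q,\eta})$: from the current controller state $x_{\nb}^\tau$ there always exists a path to some marked state, which can be realized as a continuation $x'(\cdot)$ of $x(\cdot)$ by the same inductive construction (each step nonblocking and $\mathcal{R}_\mu$--preserving), so that $x'(t'_f)\in\mathcal{X}^c_f$. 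Combining the two conditions over the finite run completes the argument; no analytic estimate beyond those of Theorem~\ref{Thmaindec} is needed, the only genuinely new ingredients being the reduction $\mathcal{I}_c=\mathcal{I}$ and the use of coaccessibility to accommodate the controller's nondeterminism.
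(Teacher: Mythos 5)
Your proposal is correct and matches the paper's approach exactly: the paper omits this proof, stating only that it ``follows the same reasoning as the proof of Theorem~\ref{Thmaindec}'', which is precisely the induction you carry out, with the nondeterminism of $C_c$ handled via the nonblocking/coaccessible structure of $\Trim(S^c_{Q,\eta})$ and the initial-state alignment coming from the union in (\ref{initc}). Your preliminary observation that $\mathcal{I}_c=\mathcal{I}$ (hence $S^c_{Q,\eta}=S_{Q,\eta}$) is also correct --- it is exactly the identity the paper establishes later in the proof of Theorem~\ref{thconnex} --- although it is not strictly needed here, since the centralized induction can be run directly on $\Trim(S^c_{Q,\eta})$ using the joint input supplied by (\ref{condFusC}).
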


The proof of the above result follows the same reasoning as the proof of Theorem \ref{Thmaindec} and is therefore omitted. From the above result, it is readily seen that 

\begin{corollary}
\label{ThPartCen}
Suppose that Assumption \ref{A1} holds and select $\eta$ as required in Theorem \ref{propcentr}. Then, $\mathbf{L}_{\mathbf{Q}}(\Sigma^{C_c})=\mathcal{L}_m^y(\Trim(S^c_{Q,\eta}))$.
\end{corollary}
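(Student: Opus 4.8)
The goal is to establish Corollary \ref{ThPartCen}, namely that the part of the specification $L_Q$ enforceable on $\Sigma$ by the centralized controller $C_c$ coincides exactly with the marked output language of $\Trim(S^c_{Q,\eta})$; that is, $\mathbf{L}_{\mathbf{Q}}(\Sigma^{C_c})=\mathcal{L}_m^y(\Trim(S^c_{Q,\eta}))$. The plan is to prove the two inclusions separately, exploiting Theorem \ref{propcentr} for one direction and the strong $\mu$-approximate bisimulation $\mathcal{R}_\mu$ together with the construction of $S^c_{Q,\eta}$ for the other.

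First I would prove the inclusion $\mathcal{L}_m^y(\Trim(S^c_{Q,\eta})) \subseteq \mathbf{L}_{\mathbf{Q}}(\Sigma^{C_c})$. Given any word $\mathbf{q}=q_0 q_1 \ldots q_{t_f}$ marked by $\Trim(S^c_{Q,\eta})$, there is an accepting run $x_{\nb}^0 \rTo_{\nb} x_{\nb}^1 \rTo_{\nb} \ldots \rTo_{\nb} x_{\nb}^{t_f}$ with $x_{\nb}^0\in X_{\nb,0}$, $x_{\nb}^{t_f}\in X_{\nb,m}$, and $H_{\nb}(x_{\nb}^t)=q_t$. Since $\Trim(S^c_{Q,\eta})$ is a subsystem of $S^c_{Q,\eta}$, each transition along this run satisfies (\ref{condFusC}), so at every step there exists a joint input $u(t)=(u_1(t),\ldots,u_N(t))\in U^\eta$ realizing the symbolic transition. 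Mirroring the induction in the proof of Theorem \ref{Thmaindec}, one starts from $x(0)\in\mathcal{X}_0^c$ chosen so that $(x(0),\xi(0))\in\mathcal{R}_\mu$ with $\xi_i(0)=[H_{\nb,i}(x_{\nb}^0)]_{\eta(i)}$, applies the matching joint inputs, uses determinism of $S^\eta(\Sigma)$ and the bisimulation property to propagate $(x(t),\xi(t))\in\mathcal{R}_\mu$, and invokes (\ref{condMain2}) exactly as in (\ref{diff0}) and (\ref{diff00}) to obtain $\Vert x(t)-q_t\Vert\leq\theta$. The resulting trajectory of $\Sigma^{C_c}$ witnesses that $\mathbf{q}\in\mathbf{L}_{\mathbf{Q}}(\Sigma^{C_c})$ by Definition \ref{DefPartCen}.

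For the reverse inclusion $\mathbf{L}_{\mathbf{Q}}(\Sigma^{C_c}) \subseteq \mathcal{L}_m^y(\Trim(S^c_{Q,\eta}))$, I would take any word $\mathbf{q}=q_0 q_1 \ldots q_{t_f}\in L_Q$ for which Definition \ref{DefPartCen} supplies a trajectory $x(\cdot)$ of $\Sigma^{C_c}$ with $x(0)\in\mathcal{X}_{0,c}$, $x(t_f)\in\mathcal{X}_{f,c}$, and $\Vert x(t)-q_t\Vert\leq\theta$ for all $t$. Because $q_0 q_1 \ldots q_{t_f}\in L_Q=\mathcal{L}_m^y(S_Q)$, this word corresponds to an accepting path $x_Q^0 \rTo_Q x_Q^1 \rTo_Q \ldots \rTo_Q x_Q^{t_f}$ in $S_Q$ with $H_Q(x_Q^t)=q_t$. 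The key is to argue that every transition $x_Q^t \rTo_Q x_Q^{t+1}$ on this path satisfies (\ref{condFusC}): since the controller $C_c$ produces, at each step, a joint input $u(t)\in\mathcal{U}$ keeping the trajectory within $\theta$ of the word, and since the quantizations $[q_t^i]_{\eta(i)}$ are the relevant symbolic states, the existence of a joint input realizing condition (\ref{condCi0}) for all $i$ simultaneously follows from the bisimulation $\mathcal{R}_\mu$ relating the continuous trajectory to the symbolic one. Hence the whole path lies in $S^c_{Q,\eta}$; since it begins in an initial state and ends in a marked state that are connected, it survives the $\Trim$ operation and therefore $\mathbf{q}\in\mathcal{L}_m^y(\Trim(S^c_{Q,\eta}))$.

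The main obstacle I anticipate is the reverse inclusion, specifically the step asserting that an enforceable word's underlying path in $S_Q$ necessarily consists of transitions satisfying the joint-matchability condition (\ref{condFusC}). The subtlety is that Definition \ref{DefPartCen} only guarantees a trajectory staying within accuracy $\theta$ of \emph{some} word of $L_Q$, and one must carefully tie the actual continuous control inputs $u(t)$ back to the symbolic transitions on the exact path labeled by $\mathbf{q}$, ensuring that the joint input feasibility at the symbolic level is genuinely certified rather than merely approximated; here the determinism of $S^\eta(\Sigma)$ and the precise constants in (\ref{statem}) and (\ref{condMain2}) must be used to exclude the possibility that the trajectory tracks the word without the corresponding symbolic transition being matchable. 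Once this correspondence is pinned down, the $\Trim$ accessibility and co-accessibility properties make the membership in the marked language immediate.
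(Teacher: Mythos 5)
Your forward inclusion $\mathcal{L}_m^y(\Trim(S^c_{Q,\eta}))\subseteq \mathbf{L}_{\mathbf{Q}}(\Sigma^{C_c})$ is sound: it is exactly the content of Theorem \ref{propcentr}, whose proof mirrors the induction of Theorem \ref{Thmaindec}. The gap is in the reverse inclusion. You try to certify condition (\ref{condFusC}) along the path of $S_Q$ labelled by $\mathbf{q}$ by appealing to the bisimulation ``relating the continuous trajectory to the symbolic one'', but the symbolic state to which $x(t)$ is $\mathcal{R}_{\mu}$--related is $\xi(t)=([H_{\nb,1}(x_c(t))]_{\eta(1)},\ldots,[H_{\nb,N}(x_c(t))]_{\eta(N)})$, the quantized output of the \emph{controller's own state run}, not the quantized word symbol $([q_t^1]_{\eta(1)},\ldots,[q_t^N]_{\eta(N)})$. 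Knowing $\Vert x(t)-q_t\Vert\leq\theta$ does not give that $x(t)$ is $\mathcal{R}_{\mu}$--related to the quantization of $q_t$, because (\ref{condMain2}) only guarantees $\mu+\Vert\eta\Vert/2\leq\theta$ and $\theta$ may be strictly larger than $\mu$; and even if that relation held, condition (iv) of Definition \ref{ASR} would only produce \emph{some} symbolic successor under $u(t)$ that is $\mathcal{R}_{\mu}$--related to $x(t+1)$, with no reason for that successor to be the quantization of $q_{t+1}$. So the step ``every transition on the path satisfies (\ref{condFusC})'' is not established by your argument, and you yourself flag it as the obstacle without closing it.

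The reverse inclusion should instead be read off the structure of the controller, which is why the paper treats the corollary as immediate from Theorem \ref{propcentr}. By (\ref{contrc}), $C_c$ is a verbatim copy of $\Trim(S^c_{Q,\eta})$: $X_c=X_{\nb}$, $X_c^0=X_{\nb,0}$, $f_c$ is its transition relation, and $h_c(x_{\nb})$ collects the joint inputs certifying (\ref{condCi0}) for all $i\in[1;N]$. Hence every trajectory of $\Sigma^{C_c}$ from $\mathcal{X}_0^c$ to $\mathcal{X}_f^c$ is paired with a state run of $\Trim(S^c_{Q,\eta})$ from an initial to a marked state, and the induction of Theorem \ref{propcentr} shows that the trajectory tracks the output run of that state run within $\mu+\Vert\eta\Vert/2\leq\theta$; that output run is by definition a word of $\mathcal{L}_m^y(\Trim(S^c_{Q,\eta}))$. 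No re-examination of (\ref{condFusC}) or of the bisimulation at the level of the word $\mathbf{q}$ is needed. (Strictly speaking, identifying the word $\mathbf{q}$ of Definition \ref{DefPartCen} with the controller's output run presumes that no \emph{other} word of $L_Q$ is $\theta$--close to the same trajectory; this identification is implicit in the paper as well, but your detour through (\ref{condFusC}) neither avoids nor resolves it.)
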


A direct consequence of Theorem \ref{propcentr} and Corollary \ref{ThPartCen} is the following

\begin{corollary}
\label{coro2}
Suppose that Assumption \ref{A1} holds and select $\eta$ as required in Theorem \ref{propcentr}. 
Then, there exists a controller $C^c$ as in (\ref{Cc}) such that 
\begin{equation}
\label{Ccmax0}
\mathbf{L}_{\mathbf{Q}}(\Sigma^{C_c})=L_Q 
\end{equation}
if and only if 
\begin{equation}
\label{Ccmax}
\Trim(S_Q)=\Trim(S^c_{Q,\eta}).
\end{equation}
Moreover, if condition (\ref{Ccmax}) holds, then $C^c$ in (\ref{Cc}) specified by (\ref{contrc}) is such that condition (\ref{Ccmax0}) holds.
\end{corollary}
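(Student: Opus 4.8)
The plan is to pivot both implications on a single language identity, namely $\mathcal{L}_m^y(\Trim(S^c_{Q,\eta}))=L_Q$, and to connect it on one side to the system identity (\ref{Ccmax}) and on the other to the enforceability identity (\ref{Ccmax0}). Before doing so I would record two standing facts. \emph{(a)} Trimming never discards a marked word, since the accepting path of any marked word consists of states that are accessible (they form a prefix from an initial state) and coaccessible (each reaches the terminal marked state along the path); hence $\mathcal{L}_m^y(\Trim(S_Q))=\mathcal{L}_m^y(S_Q)=L_Q$, and, because $\rTo_{Q,\eta,c}\subseteq\rTo_Q$, also $\mathcal{L}_m^y(\Trim(S^c_{Q,\eta}))=\mathcal{L}_m^y(S^c_{Q,\eta})\subseteq L_Q$. \emph{(b)} The system $S_Q$ is output--deterministic: every word of $L_Q$ is carried by a \emph{unique} path of $S_Q$. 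This is inherited from the determinism of $S'_Q$ together with the fact that distinct successors of a state of $S_Q$ correspond to distinct outgoing transitions of $S'_Q$, which carry distinct $S'_Q$--labels and hence distinct values of $H_Q$.

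For the implication (\ref{Ccmax})$\Rightarrow$(\ref{Ccmax0}), and simultaneously the \emph{Moreover} part, I would simply instantiate Corollary \ref{ThPartCen} on the controller $C_c$ specified by (\ref{contrc}), obtaining $\mathbf{L}_{\mathbf{Q}}(\Sigma^{C_c})=\mathcal{L}_m^y(\Trim(S^c_{Q,\eta}))$. Substituting the hypothesis $\Trim(S_Q)=\Trim(S^c_{Q,\eta})$ and then fact \emph{(a)} gives $\mathbf{L}_{\mathbf{Q}}(\Sigma^{C_c})=\mathcal{L}_m^y(\Trim(S_Q))=L_Q$, so this $C_c$ is the required witness.

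For the converse I would argue in two steps. First, a \emph{maximality} step: for an arbitrary controller $C^c$ of the form (\ref{Cc}) one shows $\mathbf{L}_{\mathbf{Q}}(\Sigma^{C^c})\subseteq\mathcal{L}_m^y(\Trim(S^c_{Q,\eta}))$. Granting this, if some $C^c$ achieves (\ref{Ccmax0}) then $L_Q\subseteq\mathcal{L}_m^y(\Trim(S^c_{Q,\eta}))$, which together with the reverse inclusion of fact \emph{(a)} forces $\mathcal{L}_m^y(\Trim(S^c_{Q,\eta}))=L_Q=\mathcal{L}_m^y(\Trim(S_Q))$. Second, a \emph{structural} step upgrading this language identity to the system identity (\ref{Ccmax}): since every state and transition accessible and coaccessible in $S^c_{Q,\eta}$ is already accessible and coaccessible in $S_Q$, the system $\Trim(S^c_{Q,\eta})$ is a subsystem of $\Trim(S_Q)$, so only the reverse inclusion $\Trim(S_Q)\subseteq\Trim(S^c_{Q,\eta})$ remains. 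Here I would invoke fact \emph{(b)}: every transition of $\Trim(S_Q)$ lies on an accepted path whose output word belongs to $L_Q=\mathcal{L}_m^y(\Trim(S^c_{Q,\eta}))$, and output--determinism forces that unique path, hence that transition, to live in $\Trim(S^c_{Q,\eta})$.

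The main obstacle is the maximality step, whose natural proof mirrors the proof of Theorem \ref{propcentr} run in reverse. Given a word $q_0\cdots q_{t_f}\in\mathbf{L}_{\mathbf{Q}}(\Sigma^{C^c})$ witnessed, via Definition \ref{DefPartCen}, by a trajectory $x(\cdot)$ with $\Vert x(t)-q_t\Vert\leq\theta$, one must certify that the componentwise quantized path $[q_t]_\eta$ (the tuple whose $i$--th entry is $[q_t^i]_{\eta(i)}$) is an actual run of $\mathcal{S}(\{S^{\eta}(\Sigma_i)\}_{i\in [1;N]})$, equivalently that every transition of the $S_Q$--path of the word satisfies the matchability condition (\ref{condFusC}). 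The delicate point is that the strong $\mu$--approximate bisimulation only returns a symbolic run staying within $\mu$ of $x(\cdot)$, whereas the quantization of the specification symbols contributes a further $\Vert\eta\Vert/2$; closing the argument therefore hinges on exploiting the tightness of the budget (\ref{condMain2}), $\mu+\Vert\eta\Vert/2\leq\theta$, together with the determinism of the abstraction, to pin the tracking symbolic state at each time exactly to $[q_t]_\eta$. I expect this to be where the real work lies, the remaining bookkeeping being routine.
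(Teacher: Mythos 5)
Your treatment of the forward implication and the \emph{Moreover} clause coincides with the paper's: its entire displayed argument is the chain $\mathbf{L}_{\mathbf{Q}}(\Sigma^{C_c})=\mathcal{L}^y_m(\Trim(S^c_{Q,\eta}))=\mathcal{L}^y_m(\Trim(S_Q))=\mathcal{L}^y_m(S_Q)=L_Q$, obtained from Corollary \ref{ThPartCen} and the fact that $S_Q$ is already trim. Your facts \emph{(a)} and \emph{(b)} are sound, and \emph{(b)} (output--determinism of $S_Q$, upgrading equality of the marked languages of the trims to equality of the trims as systems) supplies a step the paper silently skips when it passes from language data to the system identity (\ref{Ccmax}).

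The genuine gap is the maximality step, which you correctly identify as carrying all the weight of the converse and then leave open --- and the route you sketch for it does not close. You need every word of $L_Q$ enforced by an \emph{arbitrary} controller (in the sense of Definition \ref{DefPartCen}) to have its $S_Q$--path inside $\Trim(S^c_{Q,\eta})$, i.e.\ each transition must satisfy (\ref{condFusC}) \emph{at the quantized specification states} $[q_t]_\eta$. But the available data is a trajectory with $\Vert x(t)-q_t\Vert\leq\theta$ and, via the bisimulation, a symbolic run $\xi(\cdot)$ with $\Vert x(t)-\xi(t)\Vert\leq\mu$; this only yields $\Vert\xi(t)-q_t\Vert\leq\mu+\theta$, which far exceeds $\Vert\eta\Vert/2$, so nothing pins $\xi(t)$ to $[q_t]_\eta$, and determinism of the abstraction cannot help because the run is anchored at $\xi(0)$, not at $[q_0]_\eta$. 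The budget (\ref{condMain2}) is spent in the opposite direction (from $[q_t]_\eta$ out to $x(t)$) and cannot be reversed. The paper does not attempt this either: its one--line disposal of the converse reads Corollary \ref{ThPartCen} as already asserting that $\mathcal{L}^y_m(\Trim(S^c_{Q,\eta}))$ \emph{is} exactly the enforceable part of $L_Q$, with the existential in the statement resolved by the canonical controller (\ref{contrc}). To make your converse work you should either restrict attention to that controller and invoke Corollary \ref{ThPartCen} as an equality, or produce an argument for arbitrary open--loop controllers that does not go through pinning the symbolic run to $[q_t]_\eta$; the latter is not available with the stated error budget.
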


\begin{proof}
By Corollary \ref{ThPartCen} and since $\Trim(S_Q)=S_Q$ we get
$\mathbf{L}_{\mathbf{Q}}(\Sigma^{C_c})  = \mathcal{L}^y_m(\Trim(S^c_{Q,\eta}))=\mathcal{L}^y_m(\Trim(S_Q)) = \mathcal{L}^y_m(S_Q)=\mathcal{L}^u_m(S'_Q)=L_Q$. 
The second part of the proof holds as a consequence of Theorem \ref{propcentr} and Corollary \ref{ThPartCen}.
\end{proof}

\begin{remark}
\label{remTSC}
Corollary \ref{coro2} states that a necessary and sufficient condition for the control system $\Sigma$ to implement the whole specification $L_Q$ up to a given accuracy $\theta$, is that the specification $L_Q$ is contained in the behavior of the control system $\Sigma$, up to the accuracy $\theta$. This result can be viewed as the counterpart in our setting, of the so--called nonblocking controllability theorem (NCT) in the theory of supervisory control of DES, see e.g. \cite{Cassandras}, establishing sufficient and necessary conditions for the existence of a controller enforcing a regular language specification and such that controlled plant is nonblocking. In particular, \textit{the controller solving the NCT is shown to be any DES marking the specification};  interaction between the plant and the controller is formalized through the notion of parallel composition, where \textit{the controller does not have information on the current state of the plant}. Analogies with the results reported above in this section are noticeable. Indeed, controller $C_c$ \textit{replicates} the part of \textit{the specification system} $S_Q$ which can be enforced by $\Sigma$ and \textit{is open--loop}.
\end{remark}

We conclude this section by establishing connections between the decentralized and centralized control architectures that we proposed. The following result holds.

\begin{theorem}
\label{thconnex}
Suppose that Assumption \ref{A1} holds and select $\eta$ as required in Theorem \ref{Thmaindec} (or equivalently, as required in Theorem \ref{propcentr}). Then, for any word $\mathbf{q} \in \mathbf{L}_{\mathbf{Q}}(\Sigma^{C_c})$ there exists a triplet $(C,\mathcal{X}_0,\mathcal{X}_f)$ enforcing it within accuracy $\theta$.
\end{theorem}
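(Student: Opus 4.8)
The plan is to reduce the statement to Corollary~\ref{ThPartDec} by showing that the centralized and decentralized ``enforceable'' transition relations on $S_Q$ in fact coincide. Concretely, I would first invoke Corollary~\ref{ThPartCen}, which identifies $\mathbf{L}_{\mathbf{Q}}(\Sigma^{C_c})$ with $\mathcal{L}_m^y(\Trim(S^c_{Q,\eta}))$. Thus any word $\mathbf{q}\in\mathbf{L}_{\mathbf{Q}}(\Sigma^{C_c})$ is marked by $\Trim(S^c_{Q,\eta})$, and it suffices to prove that the same word is marked by $\Trim(S_{Q,\eta})$; Corollary~\ref{ThPartDec} then immediately produces the desired triplet $(C,\mathcal{X}_0,\mathcal{X}_f)$.

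The core of the argument is the identity $\mathcal{I}=\mathcal{I}_c$ as Boolean functions on the transitions of $S_Q$. The key structural observation is that condition~(\ref{condCi0}) for index $i$ constrains only the control input $u_i$: the interconnection value $v_i$ is entirely determined by the current state $x_Q$ (through the quantized projections $[H_{Q,{j_s}}(x_Q)]_{\eta(j_s)}$, $j_s\in\mathcal{N}(i)$) and carries no dependence on any $u_j$ with $j\neq i$. Consequently the $N$ matching conditions in~(\ref{condCi0}) are completely decoupled across the index $i$. I would then argue that the existence of a joint input $u=(u_1,\dots,u_N)\in U^\eta$ satisfying all conditions simultaneously (the centralized requirement defining $\mathcal{I}_c$) is logically equivalent to the existence, for every $i$, of a local input $u_i\in U_i^\eta$ satisfying the $i$-th condition (the decentralized requirement $\bigwedge_{i}\mathcal{I}_i=\mathcal{I}$): one direction is the restriction of a joint witness to its components, the other direction assembles independently chosen local witnesses into a single joint one, which is legitimate precisely because of the decoupling.

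From $\mathcal{I}=\mathcal{I}_c$ it follows that the two transition relations agree, $\rTo_{Q,\eta}\,=\,\rTo_{Q,\eta,c}$, so the subsystems coincide, $S_{Q,\eta}=S^c_{Q,\eta}$, and therefore $\Trim(S_{Q,\eta})=\Trim(S^c_{Q,\eta})$, whence $\mathcal{L}_m^y(\Trim(S_{Q,\eta}))=\mathcal{L}_m^y(\Trim(S^c_{Q,\eta}))$. Chaining this with Corollary~\ref{ThPartCen} gives $\mathbf{q}\in\mathcal{L}_m^y(\Trim(S_{Q,\eta}))$, and Corollary~\ref{ThPartDec} completes the proof.

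I expect the main (and essentially only) obstacle to be making the decoupling rigorous --- in particular, verifying that $v_i$ depends solely on the state $x_Q$ and not on the control inputs, so that the assembly direction of the equivalence is valid. Everything else is bookkeeping: once $\mathcal{I}=\mathcal{I}_c$ is established, equality of the trimmed systems and of their marked languages is immediate from the definitions, and the final existence claim is a direct citation of Corollary~\ref{ThPartDec}.
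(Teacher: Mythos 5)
Your proposal is correct and follows essentially the same route as the paper: the paper's proof also establishes $\rTo_{Q,\eta}=\rTo_{Q,\eta,c}$ (equivalently $\mathcal{I}=\mathcal{I}_c$, since the conditions (\ref{condCi0}) decouple across $i$ because each $v_i$ is fixed by the specification state $x_Q$ alone), concludes $S_{Q,\eta}=S^c_{Q,\eta}$ and hence $\Trim(S_{Q,\eta})=\Trim(S^c_{Q,\eta})$, and then invokes Corollaries \ref{ThPartDec} and \ref{ThPartCen}. Your write-up is in fact slightly more explicit than the paper about why independently chosen local witnesses can be assembled into a joint one.
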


\begin{proof}
We start by showing $\rTo_{Q,\eta}=\rTo_{Q,\eta,c}$. 
Consider any transition 
$x_Q \rTo_{Q,\eta,c} x^+_Q$. By definition of $S^c_{Q,\eta}$ there exists $u=(u_1,u_2,...,u_N)\in U^\eta$ such that condition (\ref{condCi0}) holds for all $i\in [1;N]$ from which, $x_Q \rTo_{Q,\eta,c} x^+_Q$ is a transition of $S_{Q,\eta}$.
Conversely, consider any $x_Q \rTo_{Q,\eta} x^+_Q$. Hence, for all $i\in [1;N]$ there exists $u_i\in U^{\eta}_i$ such that  
condition (\ref{condCi0}) holds for all $i\in [1;N]$, implying by definition of $\rTo_{Q,\eta,c}$ that $x_Q \rTo_{Q,\eta} x^+_Q$ is a transition of $S^c_{Q,\eta}$. 
Thus, $\rTo_{Q,\eta}=\rTo_{Q,\eta,c}$ from which, $S_{Q,\eta}=S^c_{Q,\eta}$ and hence,  
$\Trim(S^c_{Q,\eta})=\Trim(S_{Q,\eta})$ which, by Corollaries \ref{ThPartDec} and \ref{ThPartCen}, implies the statement.
\end{proof}

This result shows that any word of the specification that can be enforced by the centralized controller $C_c$ can also be enforced by the decentralized controller $C$. The only difference is that local controllers $C_i$ need to agree in advance on which word to enforce since 
in a decentralized control architecture no communication among local controllers is allowed (see Remark \ref{remarknew}).

\section{Efficient controllers synthesis and computational complexity analysis} \label{sec6}
In this section we extend on--the--fly algorithms of \cite{PolaTAC12} to the 
synthesis of the decentralized controllers designed in Section \ref{sec4}. 
The on--the--fly procedure is reported in Algorithm \ref{alg}, where the main idea is to design controllers $C_i$ in (\ref{Ci}) without computing explicitly systems $S^{\eta} (\Sigma_i)$. Starting from system $S_Q$ in (\ref{specsys}), associated with the specification $L_Q$, Algorithm \ref{alg} returns as output, system $\Trim(S_{Q,\eta})$ and functions $h_{c,i}$, $i\in [1;N]$, on the basis of which, solution to Problem \ref{problem} is specified in (\ref{init}) and (\ref{deccontr}). It computes in line 6, for each transition $x_Q \rTo_Q x_Q^+$ and for each control system $\Sigma_i$, the set of control inputs $h_{c,i}(x_Q)$. If $h_{c,i}(x_Q)\neq \varnothing$, then transition $x_Q \rTo_Q x_Q^+$ can be matched by $\Sigma_i$ by picking any control input $u_i \in h_{c,i}(x_Q)$; in this case, 
$\mathcal{I}_i(x_Q \rTo_Q x_Q^+,\eta)$ is set in line 8 to $\tt{True}$. If each control system $\Sigma_i$ can match transition $x_Q \rTo_Q x_Q^+$, then resulting $\mathcal{I}(x_Q \rTo_Q x_Q^+,\eta)$ in line 11 evaluates as $\tt{True}$. 
System $S_{Q,\eta}$ can then be computed in line 13, by (\ref{SQeta}) that uses $\mathcal{I}$. $\Trim(S_{Q,\eta})$ is finally computed in line 14.  
Formal correctness of Algorithm \ref{alg} follows from the definition of the sets and operators involved. We also report Algorithm \ref{alg1} for  designing centralized controllers in Section \ref{sec5}, which follows the same reasoning of Algorithm \ref{alg}. \\
We conclude with a computational complexity analysis. Let $N_Q$ and $N_{U,i}$ be the cardinality of $\rTo_Q$ and of $\mathcal{U}_i$, respectively. 
It is readily seen that:

\begin{proposition}
\label{fff}
Space and time computational complexity $cc.dec$ of Algorithm \ref{alg} scales as $O(N_Q \sum_{i \in [1;N]} N_{U,i})$.
\end{proposition}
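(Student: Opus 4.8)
The plan is to count the elementary operations performed by Algorithm \ref{alg} by following its nested loop structure, since the claim asserts that both space and time complexity scale as $O(N_Q \sum_{i \in [1;N]} N_{U,i})$. First I would observe that the outer loop of the algorithm iterates over each transition $x_Q \rTo_Q x_Q^+$ of the specification system $S_Q$; by definition $N_Q$ is the cardinality of $\rTo_Q$, so there are exactly $N_Q$ such iterations. Within each iteration, line 6 computes, for each subsystem index $i \in [1;N]$, the set of admissible control inputs $h_{c,i}(x_Q)$ defined in (\ref{deccontr}). The cost of computing $h_{c,i}(x_Q)$ is governed by the search over candidate inputs $u_i \in U^{\eta}_i = \mathcal{U}_i$, where the existence of a matching transition (\ref{condCi0}) in $S^{\eta}(\Sigma_i)$ is tested; since $\mathcal{U}_i$ has cardinality $N_{U,i}$ and each test (evaluating $\psi_i$, applying the quantizer $[\cdot]_{\eta(i)}$, and comparing against $[H_{Q,i}(x^+_Q)]_{\eta(i)}$) costs a constant number of elementary operations, computing $h_{c,i}(x_Q)$ costs $O(N_{U,i})$.

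Next I would aggregate these costs. Summing over the $N$ subsystems within a single transition gives a per-transition cost of $\sum_{i \in [1;N]} O(N_{U,i}) = O\!\left(\sum_{i \in [1;N]} N_{U,i}\right)$; evaluating $\mathcal{I}_i$ in line 8 and the conjunction $\mathcal{I}$ in line 11 via (\ref{I}) adds only $O(N)$ work, which is dominated by the input-search cost provided each $N_{U,i} \geq 1$ (guaranteed since $\mathcal{U}_i$ contains the origin). Multiplying the per-transition cost by the $N_Q$ iterations of the outer loop yields the total time bound $O\!\left(N_Q \sum_{i \in [1;N]} N_{U,i}\right)$. The construction of $S_{Q,\eta}$ in line 13 selects the subset of transitions flagged $\tt{True}$, costing $O(N_Q)$, and the $\Trim$ operation in line 14 runs in time linear in the size of $S_{Q,\eta}$, hence $O(N_Q)$, both of which are absorbed into the stated bound. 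For the space complexity, I would note that the algorithm stores $S_Q$ together with the per-transition outputs: the transition relation $\rTo_Q$ occupies $O(N_Q)$, and storing the sets $h_{c,i}(x_Q)$ across all transitions and all subsystems requires $O\!\left(N_Q \sum_{i \in [1;N]} N_{U,i}\right)$, matching the asserted figure.

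The step most likely to need care is justifying that each individual feasibility test (\ref{condCi0}) is genuinely constant-cost, i.e. that evaluating the transition relation $\rTo_{\eta,i}$ of the symbolic subsystem does not hide a dependence on the sizes of the state sets $X^{\eta}_i$ or $W^{\eta}_i$. This is precisely where the on--the--fly nature of the algorithm matters: because $S^{\eta}(\Sigma_i)$ is deterministic (as established after Definition \ref{symbmod}) and its transitions are computed directly from $\psi_i$ via the quantizer rather than by looking up a precomputed table, the successor $[\psi_i(\xi_i,w_i,u_i)]_{\eta(i)}$ is obtained by a single function evaluation. Thus no explicit enumeration of $S^{\eta}(\Sigma_i)$ occurs, and each test is $O(1)$ under the standard assumption that arithmetic and quantizer operations are unit cost. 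Once this observation is in place, the complexity bound follows directly from the loop-counting argument above, and indeed the result is readily seen as claimed.
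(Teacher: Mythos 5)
Your proof is correct and follows the natural loop-counting argument that the paper leaves implicit (the paper states the proposition as ``readily seen'' without proof). Your key observation --- that each feasibility test in (\ref{condCi0}) is constant-cost because the successor in $S^{\eta}(\Sigma_i)$ is obtained by a single evaluation of $\psi_i$ followed by quantization, rather than by enumerating the symbolic model --- is exactly the point that makes the on--the--fly bound hold, so nothing is missing.
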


\begin{proposition}
\label{ooo}
Space and time computational complexity $cc.cen$ of Algorithm \ref{alg1} scales as $O(N_Q \prod_{i \in [1;N]} N_{U,i})$.
\end{proposition}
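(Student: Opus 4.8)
The plan is to isolate the single dominant step of Algorithm \ref{alg1} — the evaluation of the operator $\mathcal{I}_c$, equivalently the construction of the map $h_c$ in (\ref{contrc}), over all transitions of $S_Q$ — bound its cost, and then verify that every remaining step (most notably the $\Trim$ operation of (\ref{trimcentr})) is subdominant. Since the statement is of the ``readily seen'' type, the substance is purely in the bookkeeping, so I would keep the accounting tight rather than exhaustive.

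First I would fix a single transition $x_Q \rTo_Q x_Q^+$ of $S_Q$ and count the work needed to decide $\mathcal{I}_c(x_Q \rTo_Q x_Q^+,\eta)$ as in (\ref{condFusC}). By definition this amounts to searching for a \emph{joint} control input $u=(u_1,\dots,u_N)$ in $U^\eta=\bigtimes_{i\in[1;N]}U^\eta_i$ for which the $N$ matching conditions (\ref{condCi0}) hold \emph{simultaneously}. As written, Algorithm \ref{alg1} carries out this search by enumerating the Cartesian product $U^\eta$, whose cardinality is $|U^\eta|=\prod_{i\in[1;N]}|U^\eta_i|=\prod_{i\in[1;N]}N_{U,i}$ since $U^\eta_i=\mathcal{U}_i$. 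For each candidate $u$, deciding whether the conditions (\ref{condCi0}) hold reduces to a single deterministic successor computation in the network $\mathcal{S}(\{S^\eta(\Sigma_i)\}_{i\in[1;N]})$ — deterministic by construction — followed by a comparison with the target quantized state, hence $O(1)$. Consequently the cost of settling one transition is $O(\prod_{i\in[1;N]}N_{U,i})$.

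Next I would sum over the transitions of $S_Q$. There are $N_Q=|\rTo_Q|$ of them and each is processed exactly once in the main loop, so the total time spent building $S^c_{Q,\eta}$ in (\ref{SQetac}) is $O(N_Q\prod_{i\in[1;N]}N_{U,i})$. For the space bound I would observe that the algorithm records the admissible joint inputs selected in (\ref{contrc}): each such set is a subset of $U^\eta$, and across the $O(N_Q)$ transitions/states touched this stored data scales as $O(N_Q\prod_{i\in[1;N]}N_{U,i})$, matching the time bound. Finally, the $\Trim$ computation of (\ref{trimcentr}) is a standard pair of accessibility and co-accessibility sweeps on a graph with at most $N_Q$ edges, hence linear in $N_Q$ and dominated; the same holds for assembling the sets in (\ref{initc}). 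Collecting these contributions yields $cc.cen=O(N_Q\prod_{i\in[1;N]}N_{U,i})$.

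The step I expect to carry the real content is pinpointing \emph{why} the joint search produces the product $\prod_{i\in[1;N]}N_{U,i}$ rather than the sum $\sum_{i\in[1;N]}N_{U,i}$ of Proposition \ref{fff}: the centralized operator $\mathcal{I}_c$ demands one input tuple meeting all $N$ conditions at once, and Algorithm \ref{alg1} certifies this by brute-force enumeration over $U^\eta$ instead of testing each subsystem in isolation as the decentralized operators $\mathcal{I}_i$ do. The only genuine care needed is to confirm that this enumeration is indeed the bottleneck — that neither $\Trim$ nor the bookkeeping overtakes it — and to be explicit that each candidate test is $O(1)$ by determinism, which is precisely what lets the product stand alone in the final bound.
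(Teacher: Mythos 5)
Your accounting is correct and matches the paper's intent exactly: the paper offers no explicit proof (both complexity propositions are prefaced only by ``It is readily seen that''), and your argument --- per-transition enumeration of the joint input set $U^{\eta}$ of cardinality $\prod_{i\in[1;N]}N_{U,i}$, summed over the $N_Q$ transitions, with $\Trim$ and the remaining bookkeeping subdominant --- is precisely the reasoning the authors leave implicit. The only nitpick is that verifying conditions (\ref{condCi0}) for all $i\in[1;N]$ on a fixed candidate $u$ is $O(N)$ rather than $O(1)$, but this factor is absorbed in the paper's own level of granularity and does not affect the stated bound's role in the subsequent comparison with Proposition \ref{fff}.
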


As a direct consequence of Propositions \ref{fff} and \ref{ooo}, when $N_Q$ does not depend on $N$, 
as for example in the case of motion planning types specifications, we get  
\[
cc.cen \sim O(2^N), 
\quad
cc.dec \sim O(N), 
\]
i.e. from \textit{exponential} complexity with $N$ in the centralized case, to \textit{linear} complexity with $N$ in the decentralized case. 

\begin{remark}
It is easy to see that lines 4--10 of Algorithm \ref{alg}, corresponding to evaluate indicators $\mathcal{I}_i(x_Q \rTo_Q x_Q^+,\eta)$ for all $i\in [1;N]$, can be implemented via $N$ computing units, which can work independently from each other, thus leading to a parallel computing architecture. This architecture allows reduction of the time computational complexity bound in Proposition \ref{fff} from $O(N_Q \sum_{i \in [1;N]} N_{U,i})$ to $O(N_Q \max_{i \in [1;N]} N_{U,i})$ which, when $N_Q$ is independent from $N$, yields a time computational complexity which is independent from $N$. Space computational complexity does not reduce in this case and hence, 
scales as $O(N)$. 
\end{remark}

In conclusion, centralized and decentralized control architectures allow enforcing the same part of the specification $L_Q$, with the disadvantage in the decentralized case to agree in advance on which word to enforce, but with advantages in terms of computational complexity.  

\incmargin{1em}
\restylealgo{boxed}\linesnumbered
\begin{algorithm}[t]
\label{alg}
\SetLine
\caption{Decentralized local controllers design.}
\textbf{input:} $S_Q=(X_Q,X_{Q,0},U_Q,\rTo_{Q},X_{Q,m},\mathbb{R}^n,H_Q)$\;
\ForEach {$x_Q \rTo_Q x_Q^+$}
{
\textbf{set} $\mathcal{I}(x_Q \rTo_Q x_Q^+,\eta):=\tt{True}$\; 
\ForEach {$i \in [1;N]$}
{
\textbf{set} $\mathcal{I}_i(x_Q \rTo_Q x_Q^+,\eta):=\tt{False}$\; 
\textbf{compute} the set $h_{c,i}(x_Q)$ of all $u_i\in U^{\eta}_i$
satisfying (\ref{condCi0})\;
\If{$h_{c,i}(x_Q) \neq \varnothing$}
{
\textbf{set} $\mathcal{I}_i(x_Q \rTo_Q x_Q^+,\eta):=\tt{True}$\; 
}
}
\textbf{set} $\mathcal{I}(x_Q \rTo_Q x_Q^+,\eta):=\wedge_{i \in [1;N]} \mathcal{I}_i(x_Q \rTo_Q x_Q^+,\eta)$\;
}
\textbf{compute} $S_{Q,\eta}$ in (\ref{SQeta})\;
\textbf{compute} $\Trim(S_{Q,\eta})$\;
\textbf{output:} $\Trim(S_{Q,\eta})$ and $h_{c,i}$, $i\in [1;N]$\;
\end{algorithm}
\decmargin{1em}

\incmargin{1em}
\restylealgo{boxed}\linesnumbered
\begin{algorithm}
\label{alg1}
\SetLine
\caption{Centralized controller design.}
\textbf{input:} $S_Q=(X_Q,X_{Q,0},U_Q,\rTo_{Q},X_{Q,m},\mathbb{R}^n,H_Q)$\;
\ForEach {$x_Q \rTo_Q x_Q^+$}
{
\textbf{set} $\mathcal{I}_c(x_Q \rTo_Q x_Q^+,\eta):=\tt{False}$\; 
\textbf{compute} the set $h_{c}(x_Q)$ of all $u=(u_1,u_2,...,u_N)\in U^{\eta}$
satisfying (\ref{condCi0}) for all $i\in [1;N]$\;
\If{$h_{c}(x_Q) \neq \varnothing$}
{
\textbf{set} $\mathcal{I}_c (x_Q \rTo_Q x_Q^+,\eta):=\tt{True}$\; 
}
}
\textbf{compute} $S^c_{Q,\eta}$ in (\ref{SQetac})\;
\textbf{compute} $\Trim(S^c_{Q,\eta})$\;
\textbf{output:} $\Trim(S^c_{Q,\eta})$ and $h_{c}$\;
\end{algorithm}
\decmargin{1em}

\section{An illustrative example}\label{sec7}
We consider the problem of regulating the temperature in a circular building composed of $N\geq 3$ rooms, each one equipped with a heater. This example set--up is adapted  from \cite{GirardTAC16}. 
The evolution in time of the temperature $\mathbf{T}_i(t)$ of room $i$ with $i\in [1;N]$ is described by control systems $\Sigma_i$:
\begin{equation}
\label{sysexample2}
\begin{array}
{rcl}
\mathbf{T}_i (t+1) & = & \mathbf{T}_{i}(t) + \alpha(\mathbf{T}_{i+1}(t)+\mathbf{T}_{i-1}(t)-2\mathbf{T}_{i}(t))\\
                   &   & + \beta(T_e - \mathbf{T}_{i}(t))+ \gamma (T_h - \mathbf{T}_{i}(t))\mathbf{u}_i(t),
\end{array}
\end{equation}
where $\mathbf{T}_{i+1}(t)$ and $\mathbf{T}_{i-1}(t)$ are the temperature in Celsius degrees at (discrete) time $t$ of rooms $i+1$ and $i-1$, respectively (here and in the sequel indices $0$ and $N+1$ correspond to $N$ and $1$, respectively); $T_e$ is the temperature of the external environment of the building; $T_h$ is the temperature of the heater; $\alpha \in \mathbb{R}^+$ is the conduction factor between rooms $i \pm 1$ and room $i$; $\beta \in \mathbb{R}^+$ is the conduction factor between the external environment and room $i$; $\gamma \in \mathbb{R}^+$ is the conduction factor between the heater and room $i$. Control inputs $\mathbf{u}_i(t)$ at time $t$ assume values in $\mathcal{U}_i = (0.025\mathbb{Z})\cap [0,1]$. 
The specification requires $\mathbf{T}_i(t)$ to follow Table \ref{TabSpec} up to an accuracy $\theta=0.5$. 
We start by checking Assumption \ref{A1}. 
Define 
$
A=\max\{\vert 1- 2\alpha-\beta-\gamma \vert,\vert 1-2\alpha-\beta \vert \} + 2 \alpha 
$. 
Network of control systems $\Sigma_i$ in (\ref{sysexample2}) admits the following $\delta$--GAS Lyapunov function 
\begin{equation}
\label{Vexample}
V(x,x')= \Vert x - x' \Vert ,
\end{equation}
for any $x=(\mathbf{T}_{1},\mathbf{T}_{2},...,\mathbf{T}_{N}) \in \mathbb{R}^N$, $x'=(\mathbf{T}'_{1},\mathbf{T}'_{2},...,\mathbf{T}'_{N})\in \mathbb{R}^N$, with $\mathcal{K}_{\infty}$ functions
\begin{equation}
\label{fexample}
\underline{\alpha}(s)=\overline{\alpha}(s)=s,\quad
\rho(s)=(1-A) s, \quad s\in\mathbb{R}^+_0 ,
\end{equation}
provided that
\begin{equation}
\label{condex1}
A<1.
\end{equation}
Bounding function $\sigma$ of $V$ as in (\ref{ineqnew}) can be chosen as $\sigma(s)=s,s\in\mathbb{R}^+_0$. 
In this example we pick a uniform quantization $\eta\in\mathbb{R}_N^+$, i.e. $\eta(i)=\eta(j)$ for all $i,j \in [1;N]$ and for ease of notation we use $\eta$ instead of $\eta(i)$ or $\Vert \eta \Vert$. Inequality in (\ref{statem}) rewrites as 
$\eta \leq \mu \min \{(1-A),1\}$. 
While in concrete applications, parameters $\alpha$, $\beta$ and $\gamma$ need to be identified, in the sequel we choose $\alpha=0.45$, $\beta=0.045$, $\gamma=0.09$, corresponding to the Euler discretization of the model proposed in \cite{GirardTAC16} with sampling time $\tau=9$. (Larger values of the sampling time lead to instability of the discretized system.) 
We further set $T_h=50$ and $T_e=-1$ (L'Aquila is a cold city). We get $A=0.955$ that satisfies (\ref{condex1}) so that Assumption \ref{A1} holds. 
Specification in Table \ref{TabSpec} can be formalized by means of the regular expression $\mathbf{q}\mathbf{q}^\ast$ where 
\begin{equation}
\label{specc}
\begin{array}
{rl}
\mathbf{q} = & (19, 18,...,18)
  (19, 18.5,...,18.5)
 (19, 19,...,19)\\
 & (19, 19.5,...,19.5)
  (19, 20,...,20)
 (19, 20,...,20)\\
 & (19, 20,...,20)
(19, 19.5,...,19.5)
  (19, 19,...,19)\\
 & (20, 18.5,...,18.5)
(20, 18.25,...,18.25)
(19, 18,...,18).
\end{array}
\end{equation}
We get $Y_Q=\{18,18.25,18.5,19,19.5,20\}$. 
For the desired accuracy $\theta=0.5$ we can pick $\mu=0.5$ and also $\eta=0.0225$ which satisfy (\ref{statem}) and (\ref{statem3}). By this choice of $\eta$, we get $Y_Q\subset X^{\eta}$ by which, we can apply Corollary \ref{CoromaindecBis}. 
Algorithm \ref{alg} returns local controllers $C_i$ in Table \ref{TabDecContr} and $\Trim(S_{Q,\eta})=S_Q$ where $S_Q$ marks $\mathbf{q}\mathbf{q}^*$. Controllers $C_i$ for $i\in [2;N]$ are of two types: controllers $C_2=C_N$ and controllers $C_i$, $i\in [3;N-1]$ that correspond to rooms with neighboring rooms requested to follow different temperature schedules, see Table \ref{TabSpec}.
Sets $\mathcal{X}_0$ and $\mathcal{X}_f$ involved in Problem \ref{problem} result in:
\begin{equation}
\label{initfinal}
\mathcal{X}_0=\mathcal{X}_f=[18.5,19.5] \times (\bigtimes_{i\in[2;N]} [17.5,18.5]).
\end{equation}
Table \ref{TabDecContr} and (\ref{initfinal}) fully specify the solution to Problem \ref{problem} which has been solved for an arbitrarily large number $N$ of rooms.  
We report in Table \ref{TabSim} the results of the simulations on the controlled system. By comparing Tables \ref{TabSpec} and \ref{TabSim}, and by recalling the accuracy $\theta=0.5$ chosen, it is readily seen that the specification is met.   
Time of computation of Algorithm \ref{alg} is $0.1563$s, without using parallel computing architectures. 
We solved the same problem for the case of only $N=4$ rooms by using the centralized approach in Algorithm \ref{alg1}. We obtained $\Trim(S^c_{Q,\eta})=\Trim(S_{Q,\eta})$, in accordance with Theorem \ref{thconnex}. Time of computation is $163.6304$s. 
Computations have been performed on a Lenovo IP YOGA 3 PRO 8GB 512SSD.

\section{Conclusions} \label{sec8}
In this paper we proposed decentralized control architectures for enforcing regular language specifications on networks of discrete--time nonlinear control systems, within any desired accuracy. The approach taken was based on the use of symbolic models and on--the--fly inspired algorithms. A comparison with centralized control architectures was formally discussed which included also a computational complexity analysis. 
An illustrative example was presented, which showed the applicability and effectiveness of our results. 

\medskip      

\textit{Aknowledgement: }
We thank Luca Schenato for fruitful inputs on decentralized control of dynamical systems.

\begin{table}
\begin{center}
\begin{tabular}
[c]{lll}
\hline
t mod($12$)  & $\mathbf{T}_1(t)$ & $\mathbf{T}_i(t)$, $i\in [2;N]$\\ \hline
0            &   19     & 18     \\ \hline   
1            &   19     & 18.5   \\ \hline
2            &   19     & 19     \\ \hline
3            &   19     & 19.5   \\ \hline
4            &   19     & 20     \\ \hline
5            &   19     & 20     \\ \hline
6            &   19     & 20     \\ \hline
7            &   19     & 19.5   \\ \hline
8            &   19     & 19     \\ \hline
9            &   19     & 18.5   \\ \hline
10           &   19     & 18.25   \\ \hline
11           &   19     & 18   \\ \hline
\end{tabular}
\caption{Specification $L_Q$.}
\label{TabSpec}
\end{center}
\end{table}

\begin{table}
\begin{center}
\begin{tabular}
[c]{llll}
\hline
t mod($12$) & $C_1$     & $C_2 = C_N$          & $C_i$, $i\in [3;N-1]$ \\ \hline
0  &  \{0.65\}          &   \{0.45\}       & \{0.6\}              \\ \hline   
1  &  \{0.475\}         &   \{0.55\}     & \{0.625\}    \\ \hline
2  &  \{0.325\}         &   \{0.65\}     & \{0.65\}          \\ \hline
3  &  \{0.15\}          &   \{0.75\}     & \{0.65\}     \\ \hline
4  &  \{0\}             &   \{0.525\}     & \{0.35\}            \\ \hline
5  &  \{0\}             &   \{0.525\}     & \{0.35\}      \\ \hline
7  &  \{0\}             &   \{0.175\}     & \{0.025\} \\ \hline
8  &  \{0.15\}          &   \{0.1\}      & \{0\} \\ \hline
9  &  \{0.325\}         &   \{0\}      & \{0\} \\ \hline
6  &  \{0.475\}         &   \{0.075\}     & \{0.15\} \\\hline
10 &  \{0.55\}          &   \{0.025\}         & \{0.15\} \\ \hline
11 &  \{0.65\}          &   \{0.15\}         & \{0.30\} \\ \hline
\end{tabular}
\caption{Local controllers $C_i$.}
\label{TabDecContr}
\end{center}
\end{table}

\begin{table}
\begin{center}
\begin{tabular}
[c]{llll}
\hline
t mod($12$) & $\mathbf{T}_1(t)$       & $\mathbf{T}_2(t)=\mathbf{T}_N(t)$          & $\mathbf{T}_i(t)$, $i\in [3;N-1]$ \\ \hline
0  &  19.5000             &   18.5000        & 17.5000 \\ \hline   
1  &  18.9788             &   18.8462        & 18.0125 \\ \hline
2  &  18.7329             &   19.2453        & 18.5368 \\ \hline
3  &  18.6496             &   19.6773        & 19.0709 \\ \hline
4  &  18.6042             &   20.1282        & 19.5744 \\ \hline
5  &  18.5992             &   20.1021        & 19.5924 \\ \hline
6  &  18.6058             &   20.0838        & 19.6098 \\ \hline
7  &  18.6176             &   19.5475        & 19.1325 \\ \hline
8  &  18.6200             &   19.0492        & 18.6292 \\ \hline
9  &  18.6440             &   18.5357        & 18.1385 \\ \hline
10 &  18.6448             &   18.2824        & 17.8990 \\ \hline
11 &  18.6431             &   18.0186        & 17.9080 \\ \hline
\end{tabular}
\caption{Simulation results.}
\label{TabSim}
\end{center}
\end{table}

\bibliographystyle{plain}
\bibliography{biblio2}

\begin{thebibliography}{10}

\bibitem{DiscAbs}
R.~Alur, Thomas~A. Henzinger, Gerardo Lafferriere, and George~J. Pappas.
\newblock Discrete abstractions of hybrid systems.
\newblock {\em Proceedings of the IEEE}, 88:971--984, 2000.

\bibitem{IncrementalS}
D.~Angeli.
\newblock A {L}yapunov approach to incremental stability properties.
\newblock {\em IEEE Transactions on Automatic Control}, 47(3):410--421, 2002.

\bibitem{BorriNecSys2013}
A.~Borri, {D.V.} Dimarogonas, {K.H.} Johansson, {M.D.}~Di Benedetto, and
  G.~Pola.
\newblock Decentralized symbolic control of interconnected systems with
  application to vehicle platooning.
\newblock In {\em 4th IFAC Workshop on Distributed Estimation and Control in
  Networked Systems}, 2013.

\bibitem{Borri16}
A.~Borri, G.~Pola, and {M.D. Di Benedetto}.
\newblock Symbolic control design of nonlinear networked control systems.
\newblock 2016.
\newblock Submitted for publication. Arxiv 1404.0237.

\bibitem{Bullo09}
F.~Bullo, J.~Cort\'es, and S.~Martinez.
\newblock {\em Distributed Control of Robotic Networks: A Mathematical Approach
  to Motion Coordination Algorithms}.
\newblock Princeton University Press, 2009.

\bibitem{Cassandras}
{C.G.} Cassandras and S.~Lafortune.
\newblock {\em Introduction to Discrete Event Systems}.
\newblock Kluwer Academic Publishers, 1999.

\bibitem{Jflap}
Dan Caugherty.
\newblock {JFLAP}: An interactive formal languages and automata package, 1990.
\newblock Available online at http://www.jflap.org/.

\bibitem{ModelChecking}
E.M. Clarke, O.~Grumberg, and D.~Peled.
\newblock {\em Model Checking}.
\newblock MIT Press, 1999.

\bibitem{onthefly3}
C.~Courcoubetis, M.~Vardi, P.~Wolper, and M.~Yannakakis.
\newblock Memory-efficient algorithms for the verification of temporal
  properties.
\newblock {\em Formal Methods in System Design}, 1(2-3):275--288, 1992.

\bibitem{SmallGainTh2}
{S.N.} Dashkovskiy, H.~Ito, and F.~Wirth.
\newblock On a small gain theorem for {ISS} networks in dissipative {Lyapunov}
  form.
\newblock {\em European Journal of Control}, 17(4):357--365, 2011.

\bibitem{Davison76}
E.J. Davison.
\newblock Decentralized stabilization and regulation in large multivariable
  systems.
\newblock In {\em Directions in Large-Scale Systems}, pages 303--323, USA,
  1976. Springer.

\bibitem{CPSoS}
S.~Engell.
\newblock Cyber-physical systems of systems -- definition and core research and
  development areas.
\newblock Working paper on CPSoS, 2014.

\bibitem{GirardTAC16}
A.~Girard, G.~Goessler, and S.~Mouelhi.
\newblock Safety controller synthesis for incrementally stable switched systems
  using multiscale symbolic models.
\newblock {\em IEEE Transactions on Automatic Control}, 2015.
\newblock In Press. Available at IEEExplore.org.

\bibitem{AB-TAC07}
A.~Girard and G.J. Pappas.
\newblock Approximation metrics for discrete and continuous systems.
\newblock {\em IEEE Transactions on Automatic Control}, 52(5):782--798, 2007.

\bibitem{JadbabaieTAC03}
A.~Jadbabaie and J.~Lin.
\newblock Coordination of groups of mobile autonomous agents using nearest
  neighbor rules.
\newblock {\em IEEE Transactions on Automatic Control}, 48(6):988--1001, 2003.

\bibitem{LinTAC13}
F.~Lin, M.~Fardad, and {M. R.} Jovanovic.
\newblock Design of optimal sparse feedback gains via the alternating direction
  method of multipliers.
\newblock {\em IEEE Transactions on Automatic Control}, 58(9):2426--2431, 2013.

\bibitem{EgerstedtBook}
Mehran Mesbahi and Magnus Egerstedt.
\newblock {\em Graph theoretic methods in multiagent networks}.
\newblock Princeton series in applied mathematics. Princeton University Press,
  Princeton (N.J.).

\bibitem{OlfatiTAC06}
R.~Olfati-Saber.
\newblock Flocking for multi-agent dynamic systems: Algorithms and theory.
\newblock {\em IEEE Transactions on Automatic Control}, 51(3):401--420, 2006.

\bibitem{OlfatiPIEEE07}
R.~Olfati-Saber, A.~Fax, and {R. M.} Murray.
\newblock Consensus and cooperation in networked multi-agent systems.
\newblock {\em Proceedings of the IEEE}, 95(1):215--233, 2007.

\bibitem{Perutka00}
K.~Perutka.
\newblock A survey of decentralized adaptive control.
\newblock In Meng~Joo Er, editor, {\em New Trends in Technologies: Control,
  Management, Computational Intelligence and Network Systems}, pages 303--323,
  2010.

\bibitem{PolaTAC12}
G.~Pola, A.~Borri, and M.~D. Di~Benedetto.
\newblock Integrated design of symbolic controllers for nonlinear systems.
\newblock {\em IEEE Transactions on Automatic Control}, 57(2):534 --539, feb.
  2012.

\bibitem{PolaCDC16dec}
G.~Pola, P.~Pepe, and {M.D.}~Di Benedetto.
\newblock On decentralized approximate supervisory control of networks of
  nonlinear control systems.
\newblock In {\em 55th IEEE Conference on Decision and Control}, Las Vegas, NV,
  USA, December 2016.
\newblock Submitted.

\bibitem{PolaTAC16}
G.~Pola, P.~Pepe, and {M.D. Di Benedetto}.
\newblock Symbolic models for networks of control systems.
\newblock {\em IEEE Transactions on Automatic Control}, 2016.
\newblock In Press. Available at IEEExplore.org.

\bibitem{RotkowitzTAC06}
M.~Rotkowitz and S.~Lall.
\newblock A characterization of convex problems in decentralized control.
\newblock {\em IEEE Transactions on Automatic Control}, 51(2):274--286, 2006.

\bibitem{Rudie92}
K.~Rudie and W.~M. Wonham.
\newblock Think globally, act locally: Decentralized supervisory control.
\newblock {\em IEEE Transactions on Automatic Control}, 37(11):1692--1708,
  November 1992.

\bibitem{SandellTAC78}
{N.R.} Sandell, P.~Varaiya, M.~Athans, and {M.G} Safonov.
\newblock Survey of decentralized control methods for large scale systems.
\newblock {\em IEEE Transactions on Automatic Control}, 23(2):108--128, 1978.

\bibitem{Siljak12}
D.D. Siljak.
\newblock {\em Decentralized Control of Complex Systems}.
\newblock Dover Publications, Inc., Mineola, New York, 2012.

\bibitem{TabuadaTAC08}
P.~Tabuada.
\newblock An approximate simulation approach to symbolic control.
\newblock {\em IEEE Transactions on Automatic Control}, 53(6):1406--1418, 2008.

\bibitem{paulo}
P.~Tabuada.
\newblock {\em Verification and Control of Hybrid Systems: A Symbolic
  Approach}.
\newblock Springer, 2009.

\bibitem{onthefly2}
S.~Tripakis and K.~Altisen.
\newblock On-the-fly controller synthesis for discrete and dense-time systems.
\newblock In {\em World Congress on Formal Methods in the Development of
  Computing Systems}, volume 1708 of {\em Lecture Notes in Computer Science},
  pages 233 -- 252. Springer Verlag, Berlin, September 1999.

\end{thebibliography}


\begin{thebibliography}{10}

\bibitem{IncrementalS}
D.~Angeli.
\newblock A {L}yapunov approach to incremental stability properties.
\newblock {\em IEEE Transactions on Automatic Control}, 47(3):410--421, 2002.

\bibitem{BayerECC2013}
B.~Bayer, M.~Burger, and F.~Allgower.
\newblock Discrete-time incremental {ISS}: A framework for robust {NMPS}.
\newblock In {\em European Control Conference}, pages 2068--2073, Zurick,
  Switzerland, July 2013.

\bibitem{BH06}
C.~Belta and L.C.G.J.M. Habets.
\newblock Controlling a class of nonlinear systems on rectangles.
\newblock {\em IEEE Transactions on Automatic Control}, 51(11):1749--1759,
  2006.

\bibitem{BMP02}
A.~Bicchi, A.~Marigo, and B.~Piccoli.
\newblock On the reachability of quantized control systems.
\newblock {\em IEEE Transactions on Automatic Control}, 47(4):546--563, 2002.

\bibitem{BorriIJC2012}
A.~Borri, G.~Pola, and M.~D. Di~Benedetto.
\newblock Symbolic models for nonlinear control systems affected by
  disturbances.
\newblock {\em International Journal of Control}, 88(10):1422--1432, September
  2012.

\bibitem{caines}
P.~E. Caines and Y.~J. Wei.
\newblock Hierarchical hybrid control systems: A lattice-theoretic formulation.
\newblock {\em Special Issue on Hybrid Systems, IEEE Transaction on Automatic
  Control}, 43(4):501--508, April 1998.

\bibitem{onthefly3}
C.~Courcoubetis, M.~Vardi, P.~Wolper, and M.~Yannakakis.
\newblock Memory-efficient algorithms for the verification of temporal
  properties.
\newblock {\em Formal Methods in System Design}, 1(2-3):275--288, 1992.

\bibitem{SmallGainTh2}
{S.N.} Dashkovskiy, H.~Ito, and F.~Wirth.
\newblock On a small gain theorem for iss networks in dissipative lyapunov
  form.
\newblock {\em European Journal of Control}, 17(4):357--365, 2011.

\bibitem{forstner}
D.~Forstner, M.~Jung, and J.~Lunze.
\newblock A discrete-event model of asynchronous quantised systems.
\newblock {\em Automatica}, 38:1277--1286, 2002.

\bibitem{AB-TAC07}
A.~Girard and G.J. Pappas.
\newblock Approximation metrics for discrete and continuous systems.
\newblock {\em IEEE Transactions on Automatic Control}, 52(5):782--798, 2007.

\bibitem{GirardTAC2010}
A.~Girard, G.~Pola, and P.~Tabuada.
\newblock Approximately bisimilar symbolic models for incrementally stable
  switched systems.
\newblock {\em IEEE Transactions of Automatic Control}, 55(1):116--126, January
  2010.

\bibitem{habets}
L.C.G.J.M. Habets, P.J. Collins, and J.H.~Van Schuppen.
\newblock Reachability and control synthesis for piecewise-affine hybrid
  systems on simplices.
\newblock {\em IEEE Transactions on Automatic Control}, 51(6):938--948, 2006.

\bibitem{junge1}
O.~Junge.
\newblock A set oriented approach to global optimal control.
\newblock {\em ESAIM: Control, optimisation and calculus of variations},
  10(2):259--270, 2004.

\bibitem{koutsoukos}
Xenofon~D. Koutsoukos, Panos~J. Antsaklis, James~A. Stiver, and Michael~D.
  Lemmon.
\newblock Supervisory control of hybrid systems.
\newblock {\em Proceedings of the IEEE}, 88(7):1026--1049, July 2000.

\bibitem{moor}
T.~Moor, J.~Raisch, and S.~D. O'Young.
\newblock Discrete supervisory control of hybrid systems based on l-complete
  approximations.
\newblock {\em Journal of Discrete Event Dynamic Systems}, 12:83--107, 2002.

\bibitem{PolaTAC12}
G.~Pola, A.~Borri, and M.~D. Di~Benedetto.
\newblock Integrated design of symbolic controllers for nonlinear systems.
\newblock {\em IEEE Transactions on Automatic Control}, 57(2):534 --539, feb.
  2012.

\bibitem{PolaAutom2008}
G.~Pola, A.~Girard, and P.~Tabuada.
\newblock Approximately bisimilar symbolic models for nonlinear control
  systems.
\newblock {\em Automatica}, 44:2508--2516, October 2008.

\bibitem{Pola2014networkArxiv}
G.~Pola, P.~Pepe, and {M.D. Di Benedetto}.
\newblock Compositional symbolic models for networks of incrementally stable
  control systems, 2014.
\newblock Submitted for publication. Available at arXiv:1404.0048 [math.OC].

\bibitem{PolaIJRNC2014}
G.~Pola, P.~Pepe, and {M.D. Di Benedetto}.
\newblock Symbolic models for time-–varying time-–delay systems via
  alternating approximate bisimulation.
\newblock {\em International Journal of Robust and Nonlinear Control}, 2014.
\newblock {DOI}: 10.1002/rnc.3204, http://arxiv.org/abs/1011.5835. To appear.

\bibitem{PolaSCL10}
G.~Pola, P.~Pepe, {M.D. Di Benedetto}, and P.~Tabuada.
\newblock Symbolic models for nonlinear time-delay systems using approximate
  bisimulations.
\newblock {\em Systems and Control Letters}, 59:365--373, 2010.

\bibitem{PolaSIAM2009}
G.~Pola and P.~Tabuada.
\newblock Symbolic models for nonlinear control systems: Alternating
  approximate bisimulations.
\newblock {\em SIAM Journal on Control and Optimization}, 48(2):719--733, 2009.

\bibitem{gunther}
G.~Rei{\ss}ig.
\newblock Computation of discrete abstractions of arbitrary memory span for
  nonlinear sampled systems.
\newblock {\em in Proc. of 12th Int. Conf. Hybrid Systems: Computation and
  Control (HSCC)}, 5469:306--320, April 2009.

\bibitem{paulo}
P.~Tabuada.
\newblock {\em Verification and Control of Hybrid Systems: A Symbolic
  Approach}.
\newblock Springer, 2009.

\bibitem{TabuadaLTL}
P.~Tabuada and {G.J.} Pappas.
\newblock Linear time logic control of discrete-time linear systems.
\newblock {\em IEEE Transactions of Automatic Control}, 51(12):1862--1877,
  2006.

\bibitem{Tazaki08}
Yuichi Tazaki and Jun ichi Imura.
\newblock Bisimilar finite abstractions of interconnected systems.
\newblock In M.~Egerstedt and B.~Mishra, editors, {\em Hybrid Systems:
  Computation and Control}, volume 4981 of {\em Lecture Notes in Computer
  Science}, pages 514--527. Springer Verlag, Berlin, 2008.

\bibitem{onthefly2}
S.~Tripakis and K.~Altisen.
\newblock On-the-fly controller synthesis for discrete and dense-time systems.
\newblock In {\em World Congress on Formal Methods in the Development of
  Computing Systems}, volume 1708 of {\em Lecture Notes in Computer Science},
  pages 233 -- 252. Springer Verlag, Berlin, September 1999.

\bibitem{MajidTAC11}
M.~Zamani, M.~Mazo, G.~Pola, and P.~Tabuada.
\newblock Symbolic models for nonlinear control systems without stability
  assumptions.
\newblock {\em IEEE Transactions of Automatic Control}, 57(7):1804--1809, July
  2012.

\end{thebibliography}

\section{Appendix}

\subsection{Notation}
A directed graph $\mathcal{G}$ is a pair $(\mathcal{V},\mathcal{E})$ where $\mathcal{V}$ is the set of vertices and $\mathcal{E}\subseteq \mathcal{V} \times \mathcal{V}$ is the set of edges. 
Symbol $\wedge$ denotes the logical conjunction.  
Given two sets $X$ and $Y$ and relation $\mathcal{R}\subseteq X\times Y$, symbol $\mathcal{R}^{-1}$ denotes the inverse relation of $\mathcal{R}$, i.e.
$\mathcal{R}^{-1}=\{(y,x)\in Y\times X|( x,y)\in \mathcal{R}\}$. Given $X'\subseteq X$ and $Y'\subseteq Y$, we denote $\mathcal{R}(X')=\{y\in Y | \exists x\in X' \text{ s.t. } (x,y)\in \mathcal{R}\}$ and $\mathcal{R}^{-1}(Y')=\{x\in X | \exists y\in Y' \text{ s.t. }  (x,y)\in \mathcal{R}\}$. Symbols $\mathbb{N}_0$, $\mathbb{Z}$, $\mathbb{R}$, $\mathbb{R}^{+}$ and $\mathbb{R}_{0}^{+}$ denote the set of nonnegative integer, integer, real, positive real, and nonnegative real numbers, respectively. Symbol $\mathbb{R}^{+}_{n}$ denotes the positive orthant of $\mathbb{R}^{n}$. 
The symbol $0_n$ denotes the origin of $\mathbb{R}^n$. 
Given $n\in \mathbb{N}_0$ and $n>0$, symbol $[1;n]$ denotes $\{1,2,...,n\}$. 
Given $x\in\mathbb{R}^{n}$, symbol $x(i)$ denotes the $i$--th element of $x$ and $\Vert x\Vert$ the infinity norm of $x$. Given $a\in\mathbb{R}$ and $X\subseteq \mathbb{R}^{n}$, symbol $aX$ denotes the set $\{y\in\mathbb{R}^{n}| \exists x\in X \text{ s.t. } y=ax\}$. 
Given $\theta\in\mathbb{R}^+$ and $x\in\mathbb{R}^n$, we define $\mathcal{B}_{[\theta[}(x)=\{y\in\mathbb{R}^{n}| y(i) \in [x(i)-\theta,x(i)+\theta[, i\in[1;n]\}$.
Note that for any $\theta\in\mathbb{R}^+$, $\{\mathcal{B}_{[\theta
[}(x)\}_{x\in 2\theta \, \mathbb{Z}^n}$ is a partition of $\mathbb{R}^n$. 
Given $z\in  \mathbb{R}^{n}$, symbol $[z]_{\theta}$ denotes the unique vector in $\theta\,\mathbb{Z}^{n}$ such that $z\in \mathcal{B}_{[\theta/2
[}([z]_{\theta})$. 
%
A continuous function \mbox{$\rho:\mathbb{R}_{0}^{+}\rightarrow\mathbb{R}_{0}^{+}$} is said to belong to class $\mathcal{K}$ if it is strictly increasing and \mbox{$\rho(0)=0$}; function $\rho$ is said to belong to class $\mathcal{K}_{\infty}$ if \mbox{$\rho\in\mathcal{K}$} and $\rho(r)\rightarrow\infty$ as $r\rightarrow\infty$. 

\subsection{Systems, regular languages and approximate bisimulation}\label{sec:ApproxEquiv}
We recall from e.g. \cite{Cassandras} some notions on formal language theory. Let $Y$ be a finite set representing the alphabet. A word over $Y$ is a finite sequence $y_{1} \, y_{2} \, ... \, y_{l}$ of symbols in $Y$. The empty word is denoted by $\varepsilon$. 
The symbol $Y^\ast$ denotes the Kleene closure of $Y$, that is the collection of all words over $Y$ including $\varepsilon$. A language $L$ over $Y$ is a subset of $Y^\ast$. 
We now recall the notion of system:

\begin{definition}
\label{systems}
A system is a tuple $S=(X,X_0,U,\rTo,$ $X_m,Y,H)$, consisting of a set of states $X$, a set of initial states $X_0 \subseteq X$, a set of inputs $U$, a transition relation $\rTo \subseteq X\times U\times X$, a set of marked states $X_m \subseteq X$, a set of outputs $Y$ and an output function $H:X\rightarrow Y$.
\end{definition}

The above definition slightly extends the one of \cite{paulo} to systems with marked states. 
A transition $(x,u,x^{\prime})\in\rTo$ of $S$ is denoted by $x\rTo^{u}x^{\prime}$. System $S$ is empty if $X_0=\varnothing$. 
The evolution of systems is captured by the notions of state, input and output runs. Given a sequence of transitions of $S$
\begin{equation}
\label{seqtrans}
x_0 \rTo^{u_0} x_1 \rTo^{u_1} \,{...}\, \rTo^{u_{l-1}} x_l
\end{equation}
with $x_0 \in X_0$, the sequences 
\begin{eqnarray}
&& r_X: \, x_0 \, x_1 \, ... \, x_l,\notag\\
&& r_U: \, u_0 \, u_1 \, ... \, u_{l-1}, \label{inputrun}\\
&& r_Y: H(x_0) \, H(x_1) \, ... \, H(x_l), \label{outputrun}
\end{eqnarray}
are called a \textit{state run}, an \textit{input run} and an \textit{output run} of $S$, respectively. 
System $S$ is said to be \textit{symbolic/finite} if $X$ and $U$ are finite sets, \textit{metric} if $Y$ is equipped with a metric $\mathbf{d}:Y\times Y\rightarrow\mathbb{R}_{0}^{+}$, \textit{deterministic} if for any $x\in X$ and $u\in U$ there exists at most one transition $x \rTo^u x^+$ and \textit{nondeterministic}, otherwise. System $S$ is said \textit{nonblocking} if 
for any transitions sequence (\ref{seqtrans}) of $S$ with $x_0\in X_0$ either $x_l \in X_m$ or there exists a continuation 
$
x_0 \rTo^{u_0} x_1 \rTo^{u_1} \,{...}\, \rTo^{u_{l-1}} x_l \rTo^{u_l} \,{...}\, \rTo^{u_{l'-1}} x_{l'}
$
of it such that $x_{l'} \in X_m$, 
and \textit{blocking}, otherwise. 
The \textit{input language} (resp. \textit{output language}) of $S$, denoted $\mathcal{L}^u(S)$ (resp. $\mathcal{L}^y(S)$), is the collection of all its input runs (resp. output runs). The \textit{marked input language} (resp. \textit{marked output language}) of $S$, denoted as $\mathcal{L}_m^u(S)$ (resp. $\mathcal{L}_m^y(S)$), is the collection of all input runs $r_U$ in (\ref{inputrun}) (resp. output runs $r_Y$ in (\ref{outputrun})) such that the corresponding transitions sequence in (\ref{seqtrans}) is with ending state $x_l\in X_m$. A language $L$ over a finite set $U$ is said \textit{regular} if there exists a symbolic system $S$ with input set $U$ such that $L=\mathcal{L}_m^u (S)$.  
We also recall some unary operations on systems naturally adapted from the ones given for DES \cite{Cassandras}. 
A system $S'=(X',X'_0,U',\rTo',X'_m,Y',H')$ is said to be a \textit{subsystem} of $S=(X,X_0,U,\rTo,X_m,Y,H)$, denoted $S' \sqsubseteq S$, if $X'\subseteq X$, $X'_0\subseteq X_0$, $U'\subseteq U$, $\rTo' \subseteq \rTo$, $X'_m \subseteq X_m$, $Y' \subseteq Y$ and $H'(x)=H(x)$ for all $x\in X'$. 
The accessible part of $S$, denoted $\Ac(S)$, is the unique maximal\footnote{Here, maximality is with respect to the pre--order naturally induced by the binary operator $\sqsubseteq$.} subsystem $S'$ of $S$ such that for any state $x'$ of $S'$ there exists a state run of $S'$ ending in $x'$. 
By definition, 
if $S$ is nonempty, $\Ac(S)$ is accessible. 
The co--accessible part of $S$, denoted $\CoAc(S)$, is the unique maximal$^2$ subsystem $S'$ of $S$ such that for any state $x'\in X'$ there exists a transition sequence of $S'$ starting from $x'$ and ending in a marked state of $S'$. 
By definition, 
$\CoAc(S)$ if not empty, is nonblocking. 
The trim of $S$, denoted $\Trim(S)$, is defined as $\Trim(S)=\CoAc(\Ac(S))=\Ac(\CoAc(S))$. 
By definition, 
$\Trim(S)$, if not empty, is accessible and nonblocking. 
We conclude by recalling 
some notions related to systems' simulation and bisimulation:

\begin{definition}
\label{ASR}
\cite{Borri16} 
Let $S_{i}=(X_{i},X_{0,i},U_{i},\rTo_{i},X_{m,i},Y_{i},$ $H_{i})$ ($i=1,2$) be metric systems with the same input set $U_{1}=U_{2}$, output sets $Y_{1}=Y_{2}$ and metric $\mathbf{d}$, and let $\mu\in\mathbb{R}^{+}_{0}$ be a given accuracy. A relation $\mathcal{R}\subseteq X_{1}\times X_{2}$ is said a strong $\mu$-approximate simulation relation from $S_{1}$ to $S_{2}$ if it enjoys the following conditions:
\begin{itemize}
\item [(i)] $\forall x_{1}\in X_{0,1}$ $\exists x_{2}\in X_{0,2}$ such that $(x_{1},x_{2})\in \mathcal{R}$;
\item [(ii)] $\forall x_{1}\in X_{m,1}$ $\exists x_{2}\in X_{m,2}$ such that $(x_{1},x_{2})\in \mathcal{R}$;
\item [(iii)] $\forall (x_{1},x_{2})\in \mathcal{R}$, \mbox{$\mathbf{d}(H_{1}(x_{1}),H_{2}(x_{2}))\leq\mu$};
\item[(iv)] $\forall (x_{1},x_{2})\in \mathcal{R}$ if \mbox{$x_{1}\rTo_{1}^{u}x'_{1}$} then there exists \mbox{$x_{2}\rTo_{2}^{u}x'_{2}$} such that $(x^{\prime}_{1},x^{\prime}_{2})\in \mathcal{R}$.
\end{itemize}
Relation $\mathcal{R}$ is a strong $\mu$-approximate bisimulation relation between $S_{1}$ and $S_{2}$ if $\mathcal{R}$ is a strong $\mu$-approximate simulation relation from $S_{1}$ to $S_{2}$ and $\mathcal{R}^{-1}$ is a strong $\mu$-approximate simulation relation from $S_{2}$ to $S_{1}$. Systems $S_{1}$ and $S_{2}$ are strongly $\mu$-bisimilar, denoted \mbox{$S_{1}\cong_{\mu} S_{2}$}, if there exists a strong $\mu$-approximate bisimulation relation $\mathcal{R}$ between $S_{1}$ and $S_{2}$. 
\end{definition}

The above notion requires stronger conditions than approximate (bi)simulation of \cite{AB-TAC07} that allows transitions in condition (iv) with possibly different control labels. 

\end{document}